\documentclass[10pt]{article}
\usepackage{cite}
\usepackage{amsmath,amssymb,amsthm}
\usepackage{titlesec,hyperref}
\usepackage{color}
\usepackage{fancyhdr}
\usepackage[margin=3cm]{geometry}
\usepackage{ulem}
\usepackage{amsmath,amssymb,amsthm,mathrsfs,dsfont}
\usepackage[margin=3cm]{geometry}
\usepackage[integrals]{wasysym}
\pagestyle{fancy}
\lhead{}
\linespread{1.6}
\titleformat{\subsection}{\it}{\thesubsection.\enspace}{1pt}{}

\makeatletter
\def\ps@pprintTitle{%
   \let\@oddhead\@empty
   \let\@evenhead\@empty
   \let\@oddfoot\@empty
  \let\@evenfoot\@oddfoot
}
\makeatother

\newtheorem{theo}{Theorem}[section]
\newtheorem{lemm}[theo]{Lemma}
\newtheorem{defi}[theo]{Definition}
\newtheorem{coro}[theo]{Corollary}
\newtheorem{prop}[theo]{Proposition}
\newtheorem{rema}[theo]{Remark}
\numberwithin{equation}{section}

\def\ud{{\rm d}}

\def\bel{\begin{equation}\label}

\def\eeq{\end{equation}}

\newcommand{\beq}{\begin{equation}}
\newcommand{\beno}{\begin{equation*}}
\newcommand{\eeno}{\end{equation*}}

\allowdisplaybreaks
\begin{document}
\title{Global well-posedness for the non-viscous MHD equations with magnetic diffusion in critical Besov spaces}
\author{
Weikui $\mbox{Ye}^1$\footnote{email: 904817751@qq.com}
\quad and\quad
 Zhaoyang $\mbox{Yin}^{1,2}$\footnote{email: mcsyzy@mail.sysu.edu.cn}\\
 $^1\mbox{Department}$ of Mathematics,
Sun Yat-sen University,\\ Guangzhou, 510275, China\\
$^2\mbox{Faculty}$ of Information Technology,\\ Macau University of Science and Technology, Macau, China}
\date{}
\maketitle
\begin{abstract}
 In this paper, we mainly investigate the Cauchy problem of the non-viscous MHD equations with magnetic diffusion. We first establish the local well-posedness (existence,~uniqueness and continuous dependence) with initial data $(u_0,b_0)$ in critical Besov spaces $ {B}^{\frac{d}{p}+1}_{p,1}\times{B}^{\frac{d}{p}}_{p,1}$ with $1\leq p\leq\infty$, and give a lifespan $T$ of the solution which depends on the norm of the Littlewood-Paley decomposition
of the initial data. Then, we prove the global existence in critical Besov spaces. In particular, the results of global existence also hold in Sobolev space $ C([0,\infty); {H}^{s}(\mathbb{S}^2))\times \Big(C([0,\infty);{H}^{s-1}(\mathbb{S}^2))\cap L^2\big([0,\infty);{H}^{s}(\mathbb{S}^2)\big)\Big)$ with $s>2$, when the initial data satisfies $\int_{\mathbb{S}^2}b_0dx=0$ and $\|u_0\|_{B^1_{\infty,1}(\mathbb{S}^2)}+\|b_0\|_{{B}^{0}_{\infty,1}(\mathbb{S}^2)}\leq \epsilon$. It's worth noting that our results imply some large and low regularity initial data for the global existence, which improves considerably the recent results in \cite{weishen}.
\end{abstract}

\noindent \textit{Keywords}: the non-viscous MHD equations with magnetic diffusion, Local well-posedness, critical Besov spaces, global existence.\\
Mathematics Subject Classification: 35Q35, 35B30, 35B44, 35D10, 76W05.

\tableofcontents
\section{Introduction}
The incompressible magnetohydrodynamic (MHD) equations can be written as follows:
\begin{equation}\label{vu}
\left\{\begin{array}{lll}
u_t-\mu\Delta u+\nabla P =b\nabla b-u\nabla u,\\
b_t-\nu\Delta b+u\nabla b=b\nabla u ,\\
{\rm{div}}u={\rm{div}}b=0, \\
(u,b)|_{t=0}=(u_0,b_0),
\end{array}\right.
\end{equation}
where the vector fields $u=(u_1,u_2,...,u_d),\ b=(b_1,b_2,...,b_d)$ are the velocity and magnetic respectively, the scalar function $P$ denotes the pressure. The MHD equation is a coupled system of the Navier-Stokes equation and Maxwell's equation. This model describes the interactions between the magnetic field and the fluid of moving electrically charged particles such as plasmas, liquid metals, and electrolytes. For more physical background, one can refer to \cite{Biskamp,Davidson}.

The MHD equations are of great interest in mathematics and physics. Let's review some well-posed results about the MHD equations. When $\nu=0,~\mu\neq 0$ (non-magnetic diffusion but full viscosity) in system \eqref{vu}, we refer to \cite{Abidi,Jiu,Lin-FH,Ren,Pan} about the global existence results with initial data sufficiently close to the equilibrium. The $L^2$ decay rate was studied by Agapito and Schonbek\cite{Agapito-Schonbek}.
Fefferman et al. obtained a local existence result in $\mathbb{R}^d(d=2,3)$ with the initial data $(u_0, B_0)\in H^s(\mathbb{R}^d)\times H^s(\mathbb{R}^d)( s>d/2)$ in \cite{Feffer1} and
$(u_0, B_0)\in H^{s-1-\epsilon}(\mathbb{R}^d)\times H^s(\mathbb{R}^d)(s>d/2,0<\epsilon <1)$ in \cite{Feffer2}. Chemin et al. \cite{chemin1}
improved Fefferman et al.'s results to the inhomogenous Besov space with the initial data
$(u_0, B_0)\in B^{\frac{d}{2}-1}_{2,1}(\mathbb{R}^d)\times B^{\frac{d}{2}}_{2,1}(\mathbb{R}^d) (d=2,3)$ and also proved the uniqueness with $d=3$. Wan \cite{Wan} obtained the uniqueness with $d=2$. Recently, Li, Tan and Yin \cite{Li1} obtained the existence and uniqueness of solutions to \eqref{sp00} with the initial data
$(u_0, B_0)\in \dot{B}^{\frac{d}{p}-1}_{p,1}(\mathbb{R}^d)\times \dot{B}^{\frac{d}{p}}_{p,1}(\mathbb{R}^d)$ $(1\leq p\leq 2d)$.

However, when $\mu=0,~\nu\neq 0$ (non-viscosity but full magnetic diffusion), \eqref{vu} becomes
\begin{equation}\label{sp00}
\left\{\begin{array}{lll}
u_t+\nabla P =b\nabla b-u\nabla u,\\
b_t-\Delta b+u\nabla b=b\nabla u ,\\
{\rm{div}}u={\rm{div}}b=0, \\
(u,b)|_{t=0}=(u_0,b_0).
\end{array}\right.
\end{equation}
The study of \eqref{sp00} will become more difficult, especially the local well-posedness in the critical Besov space is unknown.
Recently, Wei and Zhang \cite{weishen} proved the global existence with sufficient small initial data in $H^4(\mathbb{S}^2)\times H^4(\mathbb{S}^2)$. This is a new result for the global well-posedness, which does not require the initial data to be near the equilibrium. Hassainia \cite{Hassainia} studied the global well-posedness for the three-dimensional MHD equations with axisymmetric initial data. 

In \cite{weishen}, Wei and Zhang proposed that it's an open problem to study the global well-posedness with low regularity of the initial data. In this paper, we invesigate the local well-posedness for \eqref{sp00} in the critical Besov space ${B}^{\frac{d}{p}+1}_{p,1}(\mathbb{T}^d)\times \big({B}^{\frac{d}{p}}_{p,1}(\mathbb{T}^d)\cap{B}^{\frac{d}{p}+2}_{p,1}(\mathbb{T}^d)\big),~\mathbb{T}=\mathbb{R}~or~\mathbb{S}$. In the periodic case ($d=2$), we solve this problem especially for the initial data with low regularity(see Theorem \ref{theoremglobal1} below).

The main difficulty is that the system is only partially parabolic, owing to the Euler type equation which is of hyperbolic type (when $b=0$,~\eqref{sp00} is the Euler equation). Hence, the continuous dependence and the global existence of the solutions are hard to deal with. However, in this paper, we will adopt the methods such as frequency decomposition and decomposition of equations to obtain the continuous dependence for \eqref{sp00}, which can be applied to prove the continuous dependence for general Euler type equations. For the global existence with $d=2$, the term $b{\rm div}b$ is hard to estimate. Fortunately, we find the $L^2$ decay of $b(t,x)$. Combining the $L^2$ decay and the critical estimations for the transport equation in the Besov spaces (see Lemma \ref{priori estimate0}), we can get the global existence of \eqref{sp00}. It's worth noting that we obtain the $L^2$ decay of $b(t,x)$ by setting $\|u_0\|_{B^1_{\infty,1}}+\|b_0\|_{{B}^{0}_{\infty,1}}$ small enough, not the $\|u_0\|_{H^4},~\|b_0\|_{H^4}$, which means some results in \cite{weishen} can be improved and some large initial data will make sense.

Our main theorem can be stated as follows.
\begin{theo}\label{theorem}
Let $(u_0,b_0)\in \dot{B}^{\frac{d}{p}+1}_{p,1}(\mathbb{T}^d)\times \dot{B}^{\frac{d}{p}}_{p,1}(\mathbb{T}^d) $ with $d\geq 2,~p\in[1,\infty]$. Then there exists a positive time $T$ such that \eqref{sp00} is locally well-posed in $E^p_T$ in the sense of Hadamard, where $E^p_T:=C([0,T];\dot{B}^{\frac{d}{p}+1}_{p,1}(\mathbb{T}^d))\times \Big(C([0,T];\dot{B}^{\frac{d}{p}}_{p,1}(\mathbb{T}^d))\cap L^1([0,T];\dot{B}^{\frac{d}{p}+2}_{p,1}(\mathbb{T}^d))\Big)$.
\end{theo}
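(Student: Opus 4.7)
The plan is to follow the classical four-step Hadamard scheme adapted to critical Besov spaces. First, I would construct approximate solutions by linearization: set $(u^0,b^0):=(u_0,b_0)$ and solve inductively
\[
\begin{cases}
\partial_t u^{n+1} + u^n\cdot\nabla u^{n+1} + \nabla P^{n+1} = b^n\cdot\nabla b^n,\\
\partial_t b^{n+1} - \Delta b^{n+1} + u^n\cdot\nabla b^{n+1} = b^n\cdot\nabla u^n,\\
\dive u^{n+1} = \dive b^{n+1} = 0,\quad (u^{n+1},b^{n+1})|_{t=0}=(u_0,b_0),
\end{cases}
\]
where $\nabla P^{n+1}$ is determined by applying the Leray projector to the first equation. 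Each step is a linear transport equation for $u^{n+1}$ coupled with a linear transport--diffusion equation for $b^{n+1}$, both solvable by standard theory at fixed $n$.

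Second, I would establish uniform bounds in $E^p_T$ on a common time interval. For $u^{n+1}$ I would apply the critical transport estimate of Lemma \ref{priori estimate0} with drift $u^n$ and source in $\dot B^{d/p+1}_{p,1}$; for $b^{n+1}$ I would invoke the transport--diffusion estimate in $\dot B^{d/p}_{p,1}$ that additionally provides the $L^1_T(\dot B^{d/p+2}_{p,1})$ smoothing. The source terms $b^n\cdot\nabla b^n$ and $b^n\cdot\nabla u^n$ are handled with the Besov product rule $\|fg\|_{\dot B^s_{p,1}}\lesssim \|f\|_{\dot B^{d/p}_{p,1}}\|g\|_{\dot B^s_{p,1}}$ for $s\in\{d/p,d/p+1\}$. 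A standard continuity/bootstrap argument then produces $T>0$, depending on the Littlewood--Paley decomposition of $(u_0,b_0)$ as announced in the abstract, over which $(u^n,b^n)$ stays bounded in $E^p_T$.

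Third, I would prove convergence and uniqueness simultaneously by estimating differences one degree lower. Setting $\delta u^n:=u^{n+1}-u^n$ and $\delta b^n:=b^{n+1}-b^n$, the pair satisfies a linear system of the same form whose source terms are products of a uniformly bounded factor and a small one. Applying the transport and transport--diffusion estimates in $\dot B^{d/p}_{p,1}\times(\dot B^{d/p-1}_{p,1}\cap L^1_T\dot B^{d/p+1}_{p,1})$, for $T$ possibly shrunk one obtains a contraction, hence a limit $(u,b)$; weak compactness together with a Fatou-type lower semicontinuity argument lifts this limit into $E^p_T$, and rerunning the linear estimates with the now-known nonlinear source delivers time-continuity in the strong topology. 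Uniqueness follows immediately by applying the same difference estimate to two solutions with identical data and Gronwall's lemma.

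Finally, the main obstacle is continuous dependence, since the $u$-equation is purely hyperbolic and the naive Lipschitz scheme in $\dot B^{d/p+1}_{p,1}$ loses a derivative in the transport term. Here I would adopt the frequency decomposition approach flagged in the introduction: split $u=S_N u + (\mathrm{Id}-S_N)u$ for a high-frequency cutoff $N$, control the low-frequency part by continuous dependence in a lower-regularity norm (where the hyperbolic estimate closes), and uniformly bound the high-frequency tail using the Besov summability of the solution together with the convergence of the initial data in $\dot B^{d/p+1}_{p,1}$, then let $N\to\infty$. The analogous decomposition applied to $b$, combined with parabolic smoothing, closes the argument. This step is the crux of the proof and is where the paper's decomposition-of-equations plus frequency-truncation technique will do the essential work.
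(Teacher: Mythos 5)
Your overall architecture (linearized iteration, uniform bounds on a time interval dictated by the Littlewood--Paley profile of the data, difference estimates one derivative lower, frequency truncation for continuous dependence) is the same as the paper's, but your third step contains the real gap. You assert that the scheme is a \emph{contraction} in $B^{\frac dp}_{p,1}\times\bigl(B^{\frac dp-1}_{p,1}\cap L^1_T B^{\frac dp+1}_{p,1}\bigr)$ and that uniqueness then ``follows immediately by Gronwall''. At critical regularity and for the full range $p\in[1,\infty]$ claimed in Theorem \ref{theorem}, this estimate does not close linearly: the pressure term $\frac{\nabla{\rm div}}{-\Delta}\bigl(u^1\nabla\delta u+\delta u\nabla u^2\bigr)$ requires product laws at regularity $\frac dp-1$ that fail for large $p$ (the remainder term in Bony's decomposition needs $\frac{2d}{p}-1>0$), and once one retreats to the third index $r=\infty$ to avoid this, the term $(\delta u)\cdot\nabla u^2$ reintroduces $\|\delta u\|_{B^{d/p}_{p,1}}$ on the right-hand side through the $L^\infty$ embedding, which is \emph{not} controlled by $\|\delta u\|_{B^{d/p}_{p,\infty}}$. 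This is exactly the logarithmic loss of the Euler equation at critical regularity. The paper's route is to obtain the limit by compactness (not contraction), and to prove the stability estimate \eqref{uniquegj2} in $B^{\frac dp}_{p,\infty}$ by combining the interpolation inequality \eqref{interpolation} with Osgood's lemma (Lemma \ref{osgood}); the resulting dependence on the data is only double-exponential, so no fixed-point contraction is available. You must either restrict $p$ so that your $r=1$ difference estimate genuinely closes, or adopt the compactness-plus-Osgood argument.

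Your final step is in the right spirit but omits the device that makes it work. For a transport equation the high-frequency tail $({\rm Id}-S_N)u^n(t)$ at positive times is \emph{not} controlled by the tail of the initial data --- frequencies mix under the flow --- so ``uniform boundedness of the tail from Besov summability'' does not yield the smallness, uniform in $n$, needed to send $N\to\infty$ after the low-frequency comparison. The paper passes to the vorticity $\Omega={\rm curl}\,u$ and decomposes $\Omega^n_j=w^n_j+z^n_j$ as in \eqref{spp5}--\eqref{spp6}: $w^n_j$ is transported by $u^n_j$ with the \emph{limit} source $F^\infty$ and converges by the stability result for transport equations with converging drifts (Lemma \ref{Convergence}), while $z^n_j$ has small data and small source $F^j-F^\infty$ and is estimated directly together with $b^n_j-b^n_\infty$. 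Some such decomposition of the \emph{equation}, not merely of the data, is needed to recover convergence in the strong norm $B^{\frac dp+1}_{p,1}$ for the hyperbolic component.
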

\begin{rema}
In \cite{Bourgain1,Bourgain2}, Bourgain and Li employed a combination of Lagrangian and Eulerian techniques to obtain strong local ill-posed results of the Euler
equation in $B^{\frac{d}{p}+1}_{p,r}$ with $p\in[1,\infty),~r\in(1,\infty],~d=2,3$. Recently, Guo, Li and Yin \cite{Lijfa} proved the Euler
equation is well-posed in $B^{\frac{d}{p}+1}_{p,1}$ with $p\in[1,\infty]$, which means that $B^{\frac{d}{p}+1}_{p,1}$ may be the critical Besov space for the well-posedness of the Euler equation. Thus, since \eqref{sp00} is the Euler equation when $b=0$, we conclude that $C([0,T];{B}^{\frac{d}{p}+1}_{p,1}(\mathbb{R}^d))\times \Big(C([0,T];{B}^{\frac{d}{p}}_{p,1}(\mathbb{R}^d))\cap L^1([0,T];{B}^{\frac{d}{p}+2}_{p,1}(\mathbb{R}^d))\Big)$ is also the critical Besov space for \eqref{sp00} with $d=2,3,~p\in[1,\infty]$.
\end{rema}

In the whole space, the global existence for \eqref{sp00} is still an open problem. But in the period case $\mathbb{S}^2$, Wei and Zhang \cite{weishen} presented the following Theorem.
\begin{theo}\label{theoremglobal0}\cite{weishen}
Let $(u_0,b_0)\in  H^4(\mathbb{S}^2)\times H^4(\mathbb{S}^2) $ with ${\rm{div}}u_0={\rm{div}}_0=0$. If
$$\int_{\mathbb{S}^2}b_0{\ud}x=0\quad and\quad \|b_0\|_{H^4(\mathbb{S}^2)}+\|u_0\|_{H^4(\mathbb{S}^2)}\leq \epsilon,\quad \text{for any $\epsilon$ small enough}$$
then \eqref{sp00} has a unique global solution $(u,b)$ belonging to $ C([0,\infty);H^4(\mathbb{S}^2))\times \Big(C([0,\infty);H^4(\mathbb{S}^2))\cap L^2\big([0,\infty);H^5(\mathbb{S}^2)\big)\Big)$.
\end{theo}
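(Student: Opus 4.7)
My plan is to combine a standard local well-posedness in $H^4\times H^4$ with a two-level energy argument — a basic $L^2$ identity plus a high-order $H^4$ estimate — and to close it by a bootstrap exploiting the smallness of the data and the mean-zero condition on $b_0$. For the local part I would run a Friedrichs--Galerkin scheme based on eigenfunctions of the Hodge Laplacian on divergence-free fields on $\mathbb{S}^2$, passing to the limit via Aubin--Lions; uniqueness follows from a standard $L^2\times H^1$ estimate on the difference of two solutions. This yields a maximal time $T^*>0$ and reduces matters to propagating a smallness bound on $\|u(t)\|_{H^4}+\|b(t)\|_{H^4}$ up to $T^*$.

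\noindent\textbf{Basic energy, mean preservation, and $L^2$-decay of $b$.} Next I would check that $\int_{\mathbb{S}^2}b\,\ud x=0$ is preserved: using $\dive u=\dive b=0$ and integration by parts, each of $u\cdot\na b$, $b\cdot\na u$ and $\Delta b$ integrates to zero. Testing \eqref{sp00} against $(u,b)$ and using $\dive b=0$ to cancel $(b\cdot\na b,u)+(b\cdot\na u,b)$ yields the identity
\beq
\tfrac12\tfrac{d}{dt}\bigl(\|u\|_{L^2}^2+\|b\|_{L^2}^2\bigr)+\|\na b\|_{L^2}^2=0.
\eeq
Combining with the Poincaré inequality $\|b\|_{L^2}\le C_0\|\na b\|_{L^2}$, valid for mean-zero fields on $\mathbb{S}^2$, gives $\int_0^\infty\|b(t)\|_{H^1}^2\,\ud t\lesssim\e^2$ and, conditional on the next step keeping $\|u\|_{H^4}$ small, an exponential decay $\|b(t)\|_{L^2}\lesssim e^{-ct}\e$.

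\noindent\textbf{High-order energy and bootstrap.} I would apply $\Lambda^4$ to both equations, pair with $\Lambda^4 u$ and $\Lambda^4 b$, and use Kato--Ponce commutator estimates to derive
\beq
\tfrac12\tfrac{d}{dt}\bigl(\|u\|_{H^4}^2+\|b\|_{H^4}^2\bigr)+\|b\|_{H^5}^2\lesssim \bigl(\|\na u\|_{L^\infty}+\|\na b\|_{L^\infty}\bigr)\bigl(\|u\|_{H^4}^2+\|b\|_{H^4}^2\bigr)+\|u\|_{H^4}\|b\|_{H^4}\|b\|_{H^5},
\eeq
where the last term is absorbed into $\tfrac12\|b\|_{H^5}^2$ by Young's inequality and smallness, while $\|\na u\|_{L^\infty}$, $\|\na b\|_{L^\infty}$ are controlled via $H^4\hookrightarrow W^{1,\infty}$. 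Setting $\mathcal E(t):=\|u(t)\|_{H^4}^2+\|b(t)\|_{H^4}^2+\int_0^t\|b(s)\|_{H^5}^2\,\ud s$, steps 2 and 3 together produce a functional inequality of the form $\mathcal E(t)\le C\e^2+C\mathcal E(t)^{3/2}$; a standard continuity argument then forces $\mathcal E(t)\le 2C\e^2$ for every $t<T^*$, so $T^*=\infty$.

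\noindent\textbf{Main obstacle.} The principal difficulty is the absence of viscosity on $u$: the $H^4$-norm of $u$ can only be propagated by a Gronwall-type bound forced by $b\cdot\na b$, so one must genuinely exploit \emph{both} the $L^2$-decay of $b$ (coming from the spectral gap of $-\Delta$ on mean-zero fields on $\mathbb{S}^2$) and the parabolic gain $b\in L^2_tH^5_x$, and interpolate them to absorb the borderline cross-term $\|u\|_{H^4}\|b\|_{H^4}\|b\|_{H^5}$ while keeping $\|u\|_{H^4}$ small. Dropping the hypothesis $\int b_0\,\ud x=0$ kills the $L^2$-decay of $b$ and the bootstrap collapses, which is precisely why this assumption is built into the statement.
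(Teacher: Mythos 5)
First, a point of reference: the paper does not actually prove Theorem \ref{theoremglobal0} — it is quoted from \cite{weishen} — and what the paper proves (Section 5) is the stronger Theorem \ref{theoremglobal1}, from which this statement follows. So the fair comparison is with the argument of Section 5.

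Your proposal has a genuine gap at the decisive step, namely the closure of the high-order bootstrap. From your differential inequality the time integral of the term $\|\nabla u\|_{L^\infty}\bigl(\|u\|_{H^4}^2+\|b\|_{H^4}^2\bigr)$ is \emph{not} bounded by $C\mathcal{E}(t)^{3/2}$: the $u$-equation carries no dissipation and $u$ has no decay mechanism (for $b\equiv 0$ the system is 2D Euler, where $\|\nabla u\|_{L^\infty}$ is at best bounded, never time-integrable), so $\int_0^t\|\nabla u\|_{L^\infty}\,{\rm d}s$ grows at least linearly in $t$ even under the smallness hypothesis. Gronwall then only yields $\mathcal{E}(t)\lesssim \varepsilon^2 e^{Ct\sup_s\|\nabla u(s)\|_{L^\infty}}$, and the continuity argument does not close on $[0,\infty)$. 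Indeed one should not expect $\|u(t)\|_{H^4}$ to stay uniformly small: in the paper's proof the high norms of $u$ are allowed to grow doubly exponentially ($\|u\|_{B^{1+2/p}_{p,1}}\le Ce^{e^{Ct}}$ at the end of Section 5), and global existence is obtained through a blow-up criterion, not through uniform smallness of the top-order energy. Your closing remark correctly identifies the absence of viscosity on $u$ as the main obstacle, but the functional inequality you propose does not actually overcome it.

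The missing idea is the two-dimensional vorticity structure combined with the decay of $b$. Setting $w=\mathrm{curl}\,u$, the stretching term vanishes in 2D and $w_t+u\cdot\nabla w$ equals a source quadratic in $b$ (of the type $b\cdot\nabla\,\mathrm{curl}\,b$), which is integrable on $[0,\infty)$ once one has the exponential decay of $b$ in $L^2$ \emph{and} in $L^\infty$ — the latter obtained in the paper by Duhamel with the heat-kernel decay $\|\nabla^k e^{t\Delta}f\|_{L^q}\lesssim t^{-k/2-1/p+1/q}\|f\|_{L^p}$ on mean-zero data, a step absent from your outline. Hence $\|w\|_{L^\infty}$ and $\|w\|_{B^0_{\infty,1}}$ stay small (up to a harmless $e^{c_{small}t}$ with $c_{small}\ll c$), which feeds back into the decay of $b$ and closes the bootstrap at the level of the \emph{low} norms $\|b\|_{L^\infty_t(B^0_{\infty,1})\cap L^1_t(B^2_{\infty,1})}$ and $\|w\|_{B^0_{\infty,1}}$ only; the $H^4$ (or $B^s_{p,1}$) regularity is then propagated a posteriori by Gronwall. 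Your $L^2$ identity, the preservation of $\int_{\mathbb{S}^2}b\,{\rm d}x=0$, and the Poincar\'e/spectral-gap observation are correct and coincide with step (1) of the paper, but without replacing the uniform $H^4$ bootstrap by this vorticity-plus-decay mechanism the proof does not go through.
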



Our obtained results generalize and improve Theorem \ref{theoremglobal0} considerably in the critical Besov spaces, as follows:
\begin{theo}\label{theoremglobal1}
Let $(u_0,b_0)\in  {B}^{s}_{p,1}(\mathbb{S}^2)\times {B}^{s-1}_{p,1}(\mathbb{S}^2)$ with $s\geq 1+\frac{2}{p},~1\leq p\leq \infty$ and ${\rm{div}}u_0={\rm{div}}b_0=0$. If
$$\int_{\mathbb{S}^2}b_0{\ud}x=0\quad and\quad\|u_0\|_{B^1_{\infty,1}(\mathbb{S}^2)}+\|b_0\|_{{B}^{0}_{\infty,1}(\mathbb{S}^2)}\leq \epsilon,\quad \text{for any $\epsilon$ small enough,}$$
then \eqref{sp00} has a unique global solution $(u,b)$ belonging to $ C([0,\infty); {B}^{s}_{p,1}(\mathbb{S}^2))\times \Big(C([0,\infty);{B}^{s-1}_{p,1}(\mathbb{S}^2))\cap L^1\big([0,\infty);{B}^{s+1}_{p,1}(\mathbb{S}^2)\big)\Big).$
\end{theo}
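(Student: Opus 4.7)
The plan is to combine the local well-posedness from Theorem \ref{theorem} with a global a priori estimate, leveraging the smallness of the critical norm and the exponential $L^2$-decay of $b$ on $\mathbb{S}^2$. I would argue in three stages: derivation of the $L^2$-decay of $b$, a global bootstrap at the critical scale $B^1_{\infty,1}\times B^0_{\infty,1}$, and propagation of this smallness to the full regularity $B^s_{p,1}\times B^{s-1}_{p,1}$.

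First I would check that the zero-mean condition $\int_{\mathbb{S}^2} b\,\ud x = 0$ propagates in time: integrating the $b$-equation against $1$ and using $\text{div}\,u=\text{div}\,b=0$ gives $\tfrac{\ud}{\ud t}\int b\,\ud x=0$, so the Poincaré inequality on $\mathbb{S}^2$ applies to $b(t,\cdot)$ at every time. Testing the $b$-equation with $b$ itself, using $\text{div}\,u=0$ to eliminate the convective term and integrating by parts with the help of $\text{div}\,b=0$ on $\int(b\cdot\nabla u)\cdot b$, yields
\begin{equation*}
\tfrac12\tfrac{\ud}{\ud t}\|b\|_{L^2}^2 + \|\nabla b\|_{L^2}^2 \le \|u\|_{L^\infty}\|b\|_{L^2}\|\nabla b\|_{L^2}.
\end{equation*}
Combined with Poincaré, whenever the bootstrap guarantees $\|u(t)\|_{L^\infty}\le \eta\ll 1$, this produces the exponential decay $\|b(t)\|_{L^2}\lesssim\|b_0\|_{L^2}e^{-ct}$.

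For the critical bootstrap, set
\begin{equation*}
M(t) := \|u\|_{L^\infty_t B^1_{\infty,1}} + \|b\|_{L^\infty_t B^0_{\infty,1}} + \|b\|_{L^1_t B^2_{\infty,1}}.
\end{equation*}
Heat maximal regularity applied block-by-block to $b_t-\Delta b=\nabla\cdot(b\otimes u - u\otimes b)$, combined with the product estimate in $B^1_{\infty,1}$ and the interpolation $\|b\|_{B^1_{\infty,1}}^2\lesssim \|b\|_{B^0_{\infty,1}}\|b\|_{B^2_{\infty,1}}$, bounds the $b$-contribution to $M(t)$ by $C\|b_0\|_{B^0_{\infty,1}} + C M(t)^2$. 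The transport estimate of Lemma \ref{priori estimate0} applied to the Euler-type equation $u_t+u\cdot\nabla u+\nabla P=b\cdot\nabla b$ bounds $\|u\|_{L^\infty_t B^1_{\infty,1}}$ by $\|u_0\|_{B^1_{\infty,1}}\exp(C\int_0^t\|\nabla u\|_{L^\infty}\,\ud s)$ plus a Duhamel term controlled by $\int_0^t\|b\cdot\nabla b\|_{B^1_{\infty,1}}\,\ud s$. The hard part is two-fold: controlling $\int_0^\infty\|\nabla u\|_{L^\infty}\,\ud s$ (not automatic since $u$ has no dissipation) and making $\int_0^\infty\|b\cdot\nabla b\|_{B^1_{\infty,1}}\,\ud s$ globally finite. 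The $L^2$-decay of $b$ is the decisive ingredient: via a low/high-frequency split and two-dimensional Bernstein it upgrades $\|b\|_{L^2}\lesssim e^{-ct}$ into time-integrable decay of $\|b\|_{L^\infty}$ and of $\|\nabla u\|_{L^\infty}$ through the Duhamel formula for $u$, so that both integrals are small. The outcome is a closed inequality $M(t)\le C_0\epsilon + C_1 M(t)^2$, which bootstraps to $M(t)\le 2C_0\epsilon$ for all $t>0$ whenever $\epsilon$ is small enough.

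Once the critical norm is globally small, propagation to $B^s_{p,1}\times B^{s-1}_{p,1}$ is routine thanks to the embeddings $B^{s-1}_{p,1}\hookrightarrow B^0_{\infty,1}$ and $B^s_{p,1}\hookrightarrow B^1_{\infty,1}$ for $s\ge 1+2/p$: standard product and paraproduct estimates together with Lemma \ref{priori estimate0} for $u$ and parabolic Besov estimates for $b$ produce Gronwall-type inequalities whose driving factors are integrals of the already-controlled critical norms, yielding finite global-in-time bounds at the higher regularity. The main obstacle throughout is the source $b\cdot\nabla b$ in the non-dissipative $u$-equation, and the exponential $L^2$-decay of $b$ (thanks to zero mean and compactness of $\mathbb{S}^2$) is precisely what closes the argument.
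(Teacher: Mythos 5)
Your overall architecture (propagation of the zero mean, Poincar\'e plus energy estimate giving $\|b(t)\|_{L^2}\lesssim \epsilon e^{-ct}$, upgrading this to decay of $\|b\|_{L^\infty}$ via Duhamel, and finally propagating the higher $B^s_{p,1}\times B^{s-1}_{p,1}$ regularity by Gronwall) coincides with the paper's. But there is a genuine gap at the heart of your bootstrap: you include $\|u\|_{L^\infty_t B^1_{\infty,1}}$ in the quantity $M(t)$ that is supposed to stay uniformly of size $\epsilon$, and you assert that the $L^2$-decay of $b$ makes $\int_0^\infty\|\nabla u\|_{L^\infty}\,\ud s$ small. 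This cannot work. The $u$-equation is an Euler equation with no dissipation; the decay of $b$ only makes the \emph{forcing} $b\cdot\nabla b$ time-integrable, it does not produce any decay of $u$ itself. Already in the unforced case $b\equiv 0$, a small stationary shear such as $u=\delta(\sin x_2,0)$ has $\|\nabla u(t)\|_{L^\infty}\equiv\delta>0$, so $\int_0^\infty\|\nabla u\|_{L^\infty}\,\ud s=\infty$ no matter how small the data. Correspondingly, the only available transport estimate at the critical regularity (Lemma \ref{priori estimate0}) gives $\|w(t)\|_{B^0_{\infty,1}}\le C\big(\|w_0\|_{B^0_{\infty,1}}+\|b\nabla\mathrm{curl}\,b\|_{L^1_t(B^0_{\infty,1})}\big)\big(1+\int_0^t\|\nabla u\|_{L^\infty}\,\ud s\big)$, whose Gronwall solution grows like $\epsilon e^{C\epsilon t}$; it does not stay below $2C_0\epsilon$, so your closed inequality $M(t)\le C_0\epsilon+C_1M(t)^2$ is not obtainable for the $u$-component.

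The paper avoids this by never asking $u$ to stay small in $B^1_{\infty,1}$: the bootstrap hypothesis is only the $O(1)$ bound $\|b\|_{L^\infty_T(B^0_{\infty,1})\cap L^1_T(B^2_{\infty,1})}\le 4$. From it one gets $\|w\|_{L^\infty}\le C^2$ (hence $\|u\|_{L^\infty}\lesssim\|w\|_{L^\infty}^{1/2}\|u_0\|_{L^2}^{1/2}$ small), then $\|b\|_{L^2}\lesssim\epsilon e^{-6ct}$, then $\|b\|_{L^\infty}\lesssim\epsilon^{1/3}e^{-ct}$, and only then $\|w(t)\|_{B^0_{\infty,1}}\le c_{small}e^{c_{small}t}$ with $c_{small}\approx\epsilon^{1/3}\le c/2$. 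The bootstrap on $b$ closes because $\int_0^\infty e^{c_{small}s}e^{-cs}\,\ud s$ is finite: the exponential decay of $b$ beats the slow exponential growth of the vorticity. The higher norms then grow double-exponentially but remain finite, which is all that is needed to conclude $T^*=\infty$. If you reformulate your bootstrap so that only the $b$-norms are controlled (with an $O(1)$ rather than small bound) and the $u$-norms are allowed to grow at a rate strictly smaller than the decay rate of $b$, your argument becomes the paper's.
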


\begin{rema}
In \cite{weishen}, Wei and Zhang proposed that it's an interesting problem to study the global well-posedness with lower regularity of the initial data (may be $H^s(s>2)$ is enough), see Theorem \ref{theoremglobal0}. We solve this problem with lower regularity such that $(u_0,b_0)\in  {H}^{s}(\mathbb{S}^2)\times {H}^{s-1}(\mathbb{S}^2)$ with $s>2$ or the critical case $(u_0,b_0)\in  {B}^{2}_{2,1}(\mathbb{S}^2)\times {B}^{1}_{2,1}(\mathbb{S}^2)$, see Theorem \ref{theoremglobal1} with $p=2$.

Moreover, comparing to the result in Theorem \ref{theoremglobal0}, we abate the condition from
$\|u_0\|_{H^4(\mathbb{S}^2)}+\|b_0\|_{H^4(\mathbb{S}^2)}\leq \epsilon$ to $\|u_0\|_{B^1_{\infty,1}(\mathbb{S}^2)}+\|b_0\|_{B^0_{\infty,1}(\mathbb{S}^2)}\leq \epsilon~(p=2)$, which implies the global well-posedness for the some large initial data. For instance, let $s=4$ and $u_0=\frac{1}{10n^{\frac{7}{2}}}\Big(sin(n x_2),sin(n x_1)\Big),~b_0=\frac{1}{10n^{\frac{5}{2}}}\Big(sin(n x_2),sin(n x_1)\Big)$. It's easy to verify that
$${\rm{div}}u_0={\rm{div}}b_0=0,~\int_{\mathbb{S}}b_0{\ud}x=0\quad and\quad \|b_0\|_{{B}^{1}_{2,1}(\mathbb{S}^2)}+\|u_0\|_{B^1_{\infty,1}(\mathbb{S}^2)}\leq\|b_0\|_{H^2(\mathbb{S}^2)}+\|u_0\|_{H^3(\mathbb{S}^2)}\approx\frac{C}{n^{\frac{1}{2}}} .$$
However, we have
$$ \|u_0\|_{H^3(\mathbb{S}^2)},\|b_0\|_{H^2(\mathbb{S}^2)}\approx n^{\frac{1}{2}}.$$
\end{rema}

For non-critical cases, Let ${C}_{E_0}:=C(\|u_0\|_{H^{s}}+\|b_0\|_{H^{s-1}})~(s>2)$. Since the interpolation yields that~($d=2$)
$$\|u_0\|_{B^1_{\infty,1}}\leq \|u_0\|^{\theta}_{L^2}\|u_0\|^{1-\theta}_{H^s},\quad \|b\|_{L^{\infty}}\leq \|b\|^{\bar{\theta}}_{L^2}\|b\|^{1-\bar{\theta}}_{B^{s-2}_{\infty,\infty}},\quad \forall s>2,$$
where $\theta=1-\frac{2}{s}$ and $\bar{\theta}=1-\frac{1}{s-1}$. By Theorem \ref{theoremglobal1}, after some modifications one can obtain the global well-posedness with some weaker conditions in the Sobolev space .
\begin{coro}\label{theoremglobal2}
Let $(u_0,b_0)\in  {H}^{s}(\mathbb{S}^2)\times{H}^{s-1}(\mathbb{S}^2) $ with any $s>2$ and ${\rm{div}}u_0={\rm{div}}b_0=0$. Assume
\begin{align}\label{xiao2}
\int_{\mathbb{S}^2}b_0{\ud}x=0 ~~ and~~\|b_0\|_{L^2(\mathbb{S}^2)}+ \|u_0\|_{L^2(\mathbb{S}^2)}\leq \min\{\frac{1}{8C^2},(\frac{c\bar{\theta}}{{C}_{E_0}+1})^{\frac{1}{{\theta}}},
(\frac{\|b_0\|_{B^0_{\infty,1}}}{{C}_{E_0}})^{\frac{1}{\bar{\theta}}} \}.	
\end{align}
Then \eqref{sp00} has a unique global solution $(u,b)$ belonging to $ C([0,\infty); {H}^{s}(\mathbb{S}^2))\times \Big(C([0,\infty);{H}^{s-1}(\mathbb{S}^2))\cap L^2\big([0,\infty);{H}^{s}(\mathbb{S}^2)\big)\Big).$
\end{coro}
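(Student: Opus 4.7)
\textbf{Proof sketch for Corollary \ref{theoremglobal2}.} The plan is to reduce the Sobolev claim to Theorem \ref{theoremglobal1} with $p=2$ via the interpolation inequalities recalled just before the corollary, and then to propagate the extra $H^s\times H^{s-1}$ regularity by a standard Sobolev energy estimate.

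For $d=2$ and $s>2$, the two interpolations displayed in the excerpt combined with the embedding $H^{s-1}(\mathbb{S}^2)\hookrightarrow B^{s-2}_{\infty,\infty}(\mathbb{S}^2)$ give
\[
\|u_0\|_{B^1_{\infty,1}}\lesssim\|u_0\|_{L^2}^{\theta}\|u_0\|_{H^s}^{1-\theta},\qquad \|b_0\|_{B^0_{\infty,1}}\lesssim\|b_0\|_{L^2}^{\bar\theta}\|b_0\|_{H^{s-1}}^{1-\bar\theta},
\]
with $\theta=1-2/s$ and $\bar\theta=1-1/(s-1)$. Together with $\|u_0\|_{H^s}+\|b_0\|_{H^{s-1}}\leq C_{E_0}$, the three alternatives defining the right-hand side of \eqref{xiao2} are calibrated precisely so that $\|u_0\|_{B^1_{\infty,1}}+\|b_0\|_{B^0_{\infty,1}}\leq\epsilon$, the Besov smallness threshold. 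Since $H^s(\mathbb{S}^2)\hookrightarrow B^{s_0}_{2,1}(\mathbb{S}^2)$ and $H^{s-1}(\mathbb{S}^2)\hookrightarrow B^{s_0-1}_{2,1}(\mathbb{S}^2)$ for any $s_0\in(2,s)$, Theorem \ref{theoremglobal1} with $p=2$ and regularity index $s_0$ produces a unique global solution $(u,b)\in C([0,\infty);B^{s_0}_{2,1})\times\bigl(C([0,\infty);B^{s_0-1}_{2,1})\cap L^1([0,\infty);B^{s_0+1}_{2,1})\bigr)$, with $\nabla u$ uniformly small in $L^\infty_tL^\infty_x$ and $b,\nabla b\in L^1_tL^\infty_x$. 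The conservation $\int b\,dx\equiv 0$ combined with the basic balance $\tfrac12\tfrac{d}{dt}(\|u\|_{L^2}^2+\|b\|_{L^2}^2)+\|\nabla b\|_{L^2}^2=0$ and Poincaré on $\mathbb{S}^2$ yields in addition the exponential $L^2$-decay $\|b(t)\|_{L^2}\leq e^{-ct}\|b_0\|_{L^2}$.

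Local well-posedness in $H^s\times H^{s-1}$ is classical and coincides with the Besov solution above by uniqueness, so the Sobolev solution exists up to its maximal time $T^\ast$. Commutator $H^s$-energy estimates on the $u$-equation (using $b\cdot\nabla b=\mathrm{div}(b\otimes b)$) combined with the parabolic $H^{s-1}$-estimate on the $b$-equation and Moser product rules yield
\[
\tfrac{d}{dt}\bigl(\|u\|_{H^s}^2+\|b\|_{H^{s-1}}^2\bigr)+c\|b\|_{H^s}^2\leq g(t)\bigl(\|u\|_{H^s}^2+\|b\|_{H^{s-1}}^2\bigr),
\]
where $g(t)$ is a linear combination of $\|\nabla u\|_{L^\infty}$, $\|b\|_{L^\infty}$, $\|\nabla b\|_{L^\infty}$ after the dangerous cross term $\|b\|_{L^\infty}\|b\|_{H^s}\|u\|_{H^s}$ has been absorbed into the dissipation by Young. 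On any finite interval $g$ is integrable, so Gronwall prevents finite-time blow-up and delivers the global $C([0,\infty);H^s)\times C([0,\infty);H^{s-1})$ piece; the $L^2_tH^s_x$ dissipation piece then follows by integrating the above inequality and using the exponential $L^2$-decay of $b$ (via interpolation with the bounded $H^{s-1}$-norm) to keep the right-hand side integrable on the whole half-line.

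\emph{Main obstacle.} The delicate point is closing the energy estimate globally in time so as to secure the $L^2_tH^s_x$ dissipation, not merely finite-time existence: $\|\nabla u\|_{L^\infty}$ is a priori only $L^\infty_t$-small, not $L^1_t$-integrable, so a naive Gronwall produces exponential-in-$t$ growth of $E(t)=\|u\|_{H^s}^2+\|b\|_{H^{s-1}}^2$ and kills integrability of $\|b\|_{H^s}^2$ over $[0,\infty)$. The remedy is to combine the uniform smallness of $\|\nabla u\|_{L^\infty}$ coming from Theorem \ref{theoremglobal1} with the exponential $L^2$-decay of $b$, interpolated against the bounded $H^{s-1}$-norm, so that the bad cross terms contribute an $L^1(0,\infty)$ piece to the Gronwall exponent and $E(t)$ stays uniformly bounded, thereby making $\|b\|_{H^s}^2$ globally integrable.
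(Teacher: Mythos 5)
Your reduction of the Sobolev statement to Theorem \ref{theoremglobal1} is exactly the route the paper indicates: the two interpolation inequalities displayed before the corollary (and your sharper variant $\|b_0\|_{B^0_{\infty,1}}\lesssim\|b_0\|_{L^2}^{\bar\theta}\|b_0\|_{H^{s-1}}^{1-\bar\theta}$, which is legitimate because the endpoints $B^{-1}_{\infty,\infty}\supset L^2$ and $B^{s-2}_{\infty,\infty}\supset H^{s-1}$ leave a genuine gap around the index $0$, so the third index upgrades to $1$) do convert the $L^2$-smallness \eqref{xiao2} into the Besov smallness $\|u_0\|_{B^1_{\infty,1}}+\|b_0\|_{B^0_{\infty,1}}\leq\epsilon$, and the embeddings $H^s\hookrightarrow B^{s_0}_{2,1}$ for $s_0<s$ let you invoke the theorem. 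That part is fine and matches the paper's (unwritten) intent.

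The gap is in your closing energy argument. First, the input you take from Theorem \ref{theoremglobal1} is stronger than what its proof supplies: the proof controls $\|{\rm curl}\,u\|_{L^\infty}\leq\frac{1}{4C}$ uniformly and $\|{\rm curl}\,u\|_{B^0_{\infty,1}}\leq c_{small}e^{c_{small}t}$, but $\|\nabla u\|_{L^\infty}$ is not bounded by $\|{\rm curl}\,u\|_{L^\infty}$ (the Riesz transforms are unbounded on $L^\infty$), so ``$\nabla u$ uniformly small in $L^\infty_tL^\infty_x$'' is not available; what is available is a bound that is small only on a time interval of length $O(1/c_{small})$ and grows thereafter. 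Second, and more importantly, the term $\|\nabla u\|_{L^\infty}\|u\|_{H^s}^2$ produced by the transport commutator in the $u$-equation is not a cross term: it involves no factor of $b$, gains nothing from the exponential $L^2$-decay of $b$, and is not time-integrable. Gronwall therefore yields at best $E(t)\lesssim E(0)\exp\big(C\int_0^t\|\nabla u\|_{L^\infty}d\tau\big)$ --- growth consistent with the paper's own double-exponential bound $Ce^{e^{Ct}}$ at the end of Section 5 --- and your assertion that $E(t)$ stays uniformly bounded is unsupported; with it collapses your derivation of $b\in L^2([0,\infty);H^s)$. The claim is still salvageable, but by a different mechanism: show that $E(t)$ grows at most like $e^{C\delta t}$ with $C\delta$ much smaller than the decay rate $c$ of $\|b\|_{L^2}$ and $\|b\|_{L^\infty}$, absorb $\|u\|_{L^\infty}\|b\|_{H^s}\|b\|_{H^{s-1}}$ into the dissipation via Young plus the interpolation $\|b\|_{H^{s-1}}\leq\|b\|_{L^2}^{1/s}\|b\|_{H^s}^{(s-1)/s}$ (leaving the integrable remainder $\|b\|_{L^2}^2$), and check that every remaining source term for $\int_0^\infty\|b\|_{H^s}^2dt$ carries a factor $e^{-ct}$ that beats the slow growth of $E$. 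You should make this quantitative rather than asserting uniform boundedness.
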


The remainder of the paper is organized as follows. In Section 2, we introduce some useful preliminaries. In Section 3, we establish the local existence and uniqueness of the solution to \eqref{sp00} when the expression of local time be given. In Section 4, we present the data-to-solutions map depends continuously on
the initial data with the common
lifespan. In Section 5, we prove the global existence of \eqref{sp00} with large initial data.\\
\textbf{Notations:} Throughout, we denote $\dot{B}^{s}_{p,r}(\mathbb{T}^d)=\dot{B}^{s}_{p,r}$, $\|u\|_{\dot{B}^{s}_{p,r}(\mathbb{T}^d)}+\|v\|_{\dot{B}^{s}_{p,r}(\mathbb{T}^d)}=\|u,v\|_{\dot{B}^{s}_{p,r}}$ and $C([0,T];\dot{B}^{s}_{p,r}(\mathbb{T}^d))=C_T(\dot{B}^{s}_{p,r})$, $L^p([0,T];\dot{B}^{s}_{p,r}(\mathbb{T}^d))=L^p_T(\dot{B}^{s}_{p,r})$ with $\mathbb{T}=\mathbb{R}~or~\mathbb{S} $.
\section{Preliminaries}
\par
In this section, we will recall some properties about the Littlewood-Paley decomposition and Besov spaces.
\begin{prop}\cite{book}
Let $\mathcal{C}$ be the annulus $\{\xi\in\mathbb{R}^d:\frac 3 4\leq|\xi|\leq\frac 8 3\}$. There exist radial functions $\chi$ and $\varphi$, valued in the interval $[0,1]$, belonging respectively to $\mathcal{D}(B(0,\frac 4 3))$ and $\mathcal{D}(\mathcal{C})$, and such that
$$ \forall\xi\in\mathbb{R}^d,\ \chi(\xi)+\sum_{j\geq 0}\varphi(2^{-j}\xi)=1, $$
$$ \forall\xi\in\mathbb{R}^d\backslash\{0\},\ \sum_{j\in\mathbb{Z}}\varphi(2^{-j}\xi)=1, $$
$$ |j-j'|\geq 2\Rightarrow\mathrm{Supp}\ \varphi(2^{-j}\cdot)\cap \mathrm{Supp}\ \varphi(2^{-j'}\cdot)=\emptyset, $$
$$ j\geq 1\Rightarrow\mathrm{Supp}\ \chi(\cdot)\cap \mathrm{Supp}\ \varphi(2^{-j}\cdot)=\emptyset. $$
The set $\widetilde{\mathcal{C}}=B(0,\frac 2 3)+\mathcal{C}$ is an annulus, and we have
$$ |j-j'|\geq 5\Rightarrow 2^{j}\mathcal{C}\cap 2^{j'}\widetilde{\mathcal{C}}=\emptyset. $$
Further, we have
$$ \forall\xi\in\mathbb{R}^d,\ \frac 1 2\leq\chi^2(\xi)+\sum_{j\geq 0}\varphi^2(2^{-j}\xi)\leq 1, $$
$$ \forall\xi\in\mathbb{R}^d\backslash\{0\},\ \frac 1 2\leq\sum_{j\in\mathbb{Z}}\varphi^2(2^{-j}\xi)\leq 1. $$
\end{prop}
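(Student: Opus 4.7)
The plan is to construct $\chi$ first as a radial cutoff and then define $\varphi$ as a dyadic difference $\varphi(\xi)=\chi(\xi/2)-\chi(\xi)$, which automatically produces the telescoping partitions of unity and forces $\varphi$ to be supported on an annulus. Concretely, I would take any smooth nonincreasing radial profile $\theta\colon [0,\infty)\to[0,1]$ with $\theta(r)=1$ for $r\leq 3/4$ and $\theta(r)=0$ for $r\geq 4/3$ (built by convolving the indicator of $[0,13/12]$ with a smooth even mollifier of radius $1/4$), and set $\chi(\xi)=\theta(|\xi|)$. Then $\chi\in\mathcal{D}(B(0,4/3))$ is radial, takes values in $[0,1]$, and equals $1$ on $B(0,3/4)$.

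Next I would define $\varphi(\xi):=\chi(\xi/2)-\chi(\xi)$. Because $\theta$ is nonincreasing in $|\xi|$, one has $\varphi\geq 0$; trivially $\varphi\leq 1$; and $\varphi$ is radial and smooth. Its support is contained in the annulus where $\chi(\xi/2)\neq \chi(\xi)$: the outer bound $|\xi|\leq 8/3$ comes from $\chi(\xi/2)=0$ for $|\xi|\geq 8/3$, and the inner bound $|\xi|\geq 3/4$ comes from $\chi(\xi/2)=\chi(\xi)=1$ for $|\xi|\leq 3/4$. Hence $\varphi\in\mathcal{D}(\mathcal{C})$. The two partition-of-unity identities follow by telescoping:
\begin{equation*}
\chi(\xi)+\sum_{j=0}^{N}\varphi(2^{-j}\xi)=\chi(2^{-N-1}\xi),\qquad \sum_{j=-N}^{N}\varphi(2^{-j}\xi)=\chi(2^{-N-1}\xi)-\chi(2^{N}\xi),
\end{equation*}
letting $N\to\infty$ (in the second, for $\xi\neq 0$) and using $\chi(0)=1$ and $\chi(\xi)=0$ for $|\xi|$ large.

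The support-separation claims are pure arithmetic on the annular radii. Since $\operatorname{Supp}\varphi(2^{-j}\cdot)\subset\{3\cdot 2^{j-2}\leq|\xi|\leq 2^{j+3}/3\}$, if $j'\geq j+2$ then $3\cdot 2^{j'-2}\geq 3\cdot 2^j>2^{j+3}/3$, giving the first emptiness statement. For the second, $\operatorname{Supp}\chi\subset\{|\xi|\leq 4/3\}$ while $\operatorname{Supp}\varphi(2^{-j}\cdot)\subset\{|\xi|\geq 3\cdot 2^{j-2}\}$, and for $j\geq 1$ we have $3\cdot 2^{j-2}\geq 3/2>4/3$. The enlarged annulus satisfies $\widetilde{\mathcal{C}}\subset\{1/12\leq|\xi|\leq 10/3\}$ by the triangle inequality, so $2^{j'}\widetilde{\mathcal{C}}\subset\{2^{j'}/12\leq|\xi|\leq 10\cdot 2^{j'}/3\}$; a direct check shows that $|j-j'|\geq 5$ forces $2^{j}\mathcal{C}\cap 2^{j'}\widetilde{\mathcal{C}}=\emptyset$.

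Finally, for the $L^2$-type lower bound, the disjoint-support properties imply that at each $\xi$ at most two of the functions $\chi(\xi)$ and $\{\varphi(2^{-j}\xi)\}_{j\geq 0}$ (resp.\ $\{\varphi(2^{-j}\xi)\}_{j\in\mathbb{Z}}$ for $\xi\neq 0$) are nonzero. Writing them as $a,b\in[0,1]$ with $a+b=1$ gives $a^{2}+b^{2}\geq 1/2$, while $a^{2}+b^{2}\leq a+b=1$, yielding the stated two-sided bounds. I expect the main bookkeeping obstacle to be the third support-separation claim with the constant $5$: it is tempting to think $|j-j'|\geq 4$ suffices, but keeping the sharp $B(0,2/3)$ in $\widetilde{\mathcal{C}}$ one must be careful that $32/12=8/3$ meets the outer radius of $\mathcal{C}$ exactly, so a strict separation requires one more dyadic step.
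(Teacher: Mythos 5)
The paper does not actually prove this proposition: it is quoted verbatim from the cited reference, so there is no in-paper argument to compare against and your construction has to stand on its own. It does. Building $\chi$ from a monotone radial profile and setting $\varphi(\xi)=\chi(\xi/2)-\chi(\xi)$ is one of the two standard routes (the other, used in the cited book, normalizes a nonnegative bump supported in $\mathcal{C}$ by the locally finite sum of its dyadic dilates and then recovers $\chi$ from the tail sum); the dyadic-difference route buys you the two partition-of-unity identities for free by telescoping, at the small price of having to check monotonicity of the profile to get $\varphi\geq 0$. Your support arithmetic for the three disjointness claims and your observation that at each point at most two of the functions are nonzero and sum to $1$ --- whence $\tfrac12=\tfrac{(a+b)^2}{2}\leq a^2+b^2\leq a+b=1$ --- are correct, and you rightly isolated the one delicate spot: $|j-j'|\geq 5$ is genuinely needed because $2^{5}/12=8/3$ lands exactly on the outer radius of $\mathcal{C}$, so the separation at $|j-j'|=5$ holds only because $B(0,\tfrac23)$ is open. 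The single cosmetic flaw is that with a mollifier of radius exactly $\tfrac14$ your profile $\theta$ is positive on all of $[0,\tfrac43)$, so $\mathrm{Supp}\,\chi$ is the \emph{closed} ball of radius $\tfrac43$ and $\chi$ does not quite lie in $\mathcal{D}(B(0,\tfrac43))$ if that denotes functions compactly supported in the open ball; taking the mollifier radius anywhere in $(0,\tfrac14)$ fixes this and changes nothing else in the argument.
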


\begin{defi}\cite{book}
Let $u$ be a tempered distribution in $\mathcal{S}'(\mathbb{R}^d)$ and $\mathcal{F}$ be the Fourier transform and $\mathcal{F}^{-1}$ be its inverse. For all $j\in\mathbb{Z}$, define
$$
\Delta_j u=0\,\ \text{if}\,\ j\leq -2,\quad
\Delta_{-1} u=\mathcal{F}^{-1}(\chi\mathcal{F}u),\quad
\Delta_j u=\mathcal{F}^{-1}(\varphi(2^{-j}\cdot)\mathcal{F}u)\,\ \text{if}\,\ j\geq 0,\quad
S_j u=\sum_{j'<j}\Delta_{j'}u.
$$
Then the Littlewood-Paley decomposition is given as follows:
\begin{align*}
u=\sum_{j\in\mathbb{Z}}\Delta_j u \quad \text{in}\ \mathcal{S}'(\mathbb{R}^d).	
\end{align*}
Let $s\in\mathbb{R},\ 1\leq p,r\leq\infty.$ The nonhomogeneous Besov space $B^s_{p,r}(\mathbb{R}^d)$ is defined by
$$ B^s_{p,r}=B^s_{p,r}(\mathbb{R}^d)=\{u\in S'(\mathbb{R}^d):\|u\|_{B^s_{p,r}(\mathbb{R}^d)}=\Big\|(2^{js}\|\Delta_j u\|_{L^p(\mathbb{S}^d)})_j \Big\|_{l^r(\mathbb{Z})}<\infty\}. $$
\end{defi}


For the periodic case, we have the following definition.
\begin{defi}
Let $u\in D'(\mathbb{S}^d)$. We similarly denote $\mathcal{F}$ by the Fourier transform and $\mathcal{F}^{-1}$ by its inverse
\[(\mathcal{F}u(x))(\xi)=\frac{1}{(2\pi)^d}\int_{\mathbb{S}^d}u(x)e^{-i\xi x}{\ud}x,\quad (\mathcal{F}^{-1}v(\xi))(x)=\frac{1}{(2\pi)^d}\sum_{\xi\in \mathbb{Z}^d}v(\xi)e^{i\xi x} \quad \xi\in \mathbb{Z}^d.\]
We define the Littlewood-Paley operators $\Delta_j$ by
$$
\Delta_j u=0\,\ \text{if}\,\ j\leq -2,\quad
\Delta_{-1} u=\int_{\mathbb{S}^d}u(x){\ud}x,\quad
\Delta_j u=\mathcal{F}^{-1}(\varphi(2^{-j}\cdot)\mathcal{F}u)\,\ \text{if}\,\ j\geq 0,\quad
S_j u=\sum_{j'<j}\Delta_{j'}u.
$$
where the functions $\varphi$ is defined in Proposition 2.1.
We can then define the Besov space $B^s_{p,r}(\mathbb{S}^d)$ such that
$$ B^s_{p,r}=B^s_{p,r}(\mathbb{S}^d)=\{u\in D'(\mathbb{S}^d):\|u\|_{B^s_{p,r}(\mathbb{S}^d)}=\|(2^{js}\|\Delta_j u\|_{L^p(\mathbb{S}^d)})_j\|_{l^r}<+\infty\}. $$
\end{defi}

\begin{defi}\cite{book}
Let $s\in\mathbb{R},1\leq p,q,r\leq\infty$ and $T\in (0,\infty].$ The function space $\widetilde{L}^q_T(\dot{B}^{s}_{p,r})$ is defined as the set of all the distributions satisfying
$\|f\|_{\widetilde{L}^q_T(\dot{B}^{s}_{p,r})}:=\|(2^{ks}\|\dot{\Delta}_kf(t)\|_{L^q_TL^p})_k\|_{l^r}<\infty .$
\end{defi}
Thanks to Minkowski's inequality, it is easy to find that
$$\|f\|_{\widetilde{L}^q_T(\dot{B}^{s}_{p,r})}\leq \|f\|_{L^q_T(\dot{B}^{s}_{p,r})},\quad q\leq r,\quad\quad\quad \|f\|_{\widetilde{L}^q_T(\dot{B}^{s}_{p,r})}\geq \|f\|_{L^q_T(\dot{B}^{s}_{p,r})},\quad q\geq r.$$

Finally, we intruduce some useful results about the following heat conductive equation and the transport equation
\begin{equation}\label{s1cuchong}
\left\{\begin{array}{l}
    u_t-\Delta u=G,\ x\in\mathbb{R}^d,\ t>0, \\
    u(0,x)=u_0(x),\ x\in\mathbb{R}^d,
\end{array}\right.
\end{equation}
\begin{equation}\label{s1}
\left\{\begin{array}{l}
    f_t+v\cdot\nabla f=g,\ x\in\mathbb{R}^d,\ t>0, \\
    f(0,x)=f_0(x),\ x\in\mathbb{R}^d,
\end{array}\right.
\end{equation}
which are crucial to the proof of our main theorem later.

\begin{lemm}\label{heat}\cite{Danchin}
Let $s\in\mathbb{R}, 1\leq q,q_1,p,r\leq\infty$ with $q_1\leq q$. Assume $u_0$ in $\dot{B}^s_{p,r}$ or ${B}^s_{p,r}$, and $G$ in $\widetilde{L}^{q_1}_T(\dot{B}^s_{p,r})$ or $\widetilde{L}^{q_1}_T(^s_{p,r})$. Then \eqref{s1cuchong} has a unique solution $u$ in $\widetilde{L}^{q}_T(\dot{B}^{s+\frac{2}{q}}_{p,r})$ or $\widetilde{L}^{q}_T({B}^{s+\frac{2}{q}}_{p,r})$ and satisfies
$$ \|u\|_{\widetilde{L}^{q}_T(\dot{B}^{s+\frac{2}{q}}_{p,r})}\leq C_1\Big(\|u_0\|_{\dot{B}^s_{p,r}}+\|G\|_{\widetilde{L}^{q_1}_T(\dot{B}^{s+\frac{2}{q_1}-2}_{p,r})}\Big) $$
or
$$ \|u\|_{\widetilde{L}^{q}_T({B}^{s+\frac{2}{q}}_{p,r})}\leq C_1\Big(\|u_0\|_{{B}^s_{p,r}}+(1+T^{1+\frac{1}{q}-\frac{1}{q_1}})\|G\|_{\widetilde{L}^{q_1}_T({B}^{s+\frac{2}{q_1}-2}_{p,r})}\Big). $$
Moreover, if $\Delta_{-1} f=0$ in the periodic case, we have
$$ \|u\|_{\widetilde{L}^{q}_T({B}^{s+\frac{2}{q}}_{p,r})}\leq C_1\Big(\|u_0\|_{{B}^s_{p,r}}+\|G\|_{\widetilde{L}^{q_1}_T({B}^{s+\frac{2}{q_1}-2}_{p,r})}\Big). $$
\end{lemm}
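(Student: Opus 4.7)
The plan is to proceed via Duhamel's formula combined with Littlewood--Paley localization in frequency, which is the standard approach for parabolic estimates in Besov spaces. First I would apply $\dot{\Delta}_j$ (resp.\ $\Delta_j$ in the nonhomogeneous case) to \eqref{s1cuchong}, obtaining $\partial_t u_j - \Delta u_j = G_j$ with $u_j|_{t=0}=\dot{\Delta}_j u_0$, so that
$$u_j(t) = e^{t\Delta}\dot{\Delta}_j u_0 + \int_0^t e^{(t-\tau)\Delta} G_j(\tau)\,{\ud}\tau.$$
The essential ingredient is the smoothing effect of the heat semigroup on frequency-localized functions: there exists $c>0$ independent of $j$ such that $\|e^{t\Delta}\dot{\Delta}_j v\|_{L^p} \leq C e^{-c 2^{2j} t}\|\dot{\Delta}_j v\|_{L^p}$ for all $j\in\mathbb{Z}$, which follows from a Bernstein-type kernel bound on the annulus $2^j\mathcal{C}$ together with Young's inequality in space.

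Taking the $L^q([0,T])$ norm in time then yields $\|e^{t\Delta}\dot{\Delta}_j u_0\|_{L^q_T L^p}\lesssim 2^{-2j/q}\|\dot{\Delta}_j u_0\|_{L^p}$, while convolving in time with the scalar kernel $e^{-c 2^{2j}t}$ and applying Young's inequality with the exponent relation $1+\frac{1}{q}=\frac{1}{q_1}+\frac{1}{q_2}$, which is permitted since $q_1\leq q$, yields a bound of the form $2^{-2j/q_2}\|G_j\|_{L^{q_1}_T L^p}$ for the Duhamel term. Multiplying both estimates by $2^{j(s+2/q)}$, using the identity $\frac{2}{q}-\frac{2}{q_2}=\frac{2}{q_1}-2$ to rewrite the forcing exponent as $2^{j(s+2/q_1-2)}$, and taking the $\ell^r(\mathbb{Z})$ norm in $j$ delivers the homogeneous estimate. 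No separate treatment of low frequencies is needed here, because the exponential decay holds for all $j\in\mathbb{Z}$ and summability over negative $j$ is built into the definition of $\dot{B}^s_{p,r}$.

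The argument for the nonhomogeneous case is identical for $j\geq 0$, but the single block $\Delta_{-1}$ must be handled separately because the heat semigroup possesses no spectral gap on frequencies $\lesssim 1$. On that block I would simply use $\|e^{t\Delta}\|_{L^p\to L^p}\leq 1$ together with H\"older in time, producing $\|\Delta_{-1}u\|_{L^q_T L^p}\leq T^{1/q}\|\Delta_{-1}u_0\|_{L^p}+T^{1+1/q-1/q_1}\|\Delta_{-1}G\|_{L^{q_1}_T L^p}$, which is precisely the origin of the $1+T^{1+1/q-1/q_1}$ factor in the second inequality. For the final periodic estimate, integrating the equation over $\mathbb{S}^d$ shows that the spatial mean of $u$ evolves according to the mean of $G$ alone; under the standing mean-zero hypothesis one has $\Delta_{-1}u\equiv 0$ for all time. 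The high-frequency argument then applies to every mode, since the Fourier modes indexed by $\xi\in\mathbb{Z}^d\cap 2^j\mathcal{C}$ still satisfy $|\xi|^2\gtrsim 2^{2j}$ for $j\geq 0$, and one recovers the sharper $T$-independent bound. The only real obstacle, and the point that dictates the shape of the three inequalities in the statement, is this absence of a spectral gap at the lowest frequency in the generic nonhomogeneous setting.
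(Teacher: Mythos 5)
Your proposal is correct, and it is precisely the standard argument behind this lemma: the paper itself gives no proof but cites it from Danchin, where the proof runs exactly as you describe (frequency localization, the smoothing bound $\|e^{t\Delta}\dot{\Delta}_j v\|_{L^p}\leq Ce^{-c2^{2j}t}\|\dot{\Delta}_j v\|_{L^p}$, Young's inequality in time with $1+\frac{1}{q}=\frac{1}{q_1}+\frac{1}{q_2}$, and separate treatment of the $\Delta_{-1}$ block, which disappears under the mean-zero hypothesis and yields the $T$-independent periodic estimate). Your identification of the low-frequency block as the sole source of the $(1+T^{1+1/q-1/q_1})$ factor is exactly right.
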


\begin{lemm}\label{priori estimate}\cite{book}
Let $s\in [\max\{-\frac{d}{p},-\frac{d}{p'}\},\frac{d}{p}+1](s=1+\frac{d}{p},r=1; s=\max\{-\frac{d}{p},-\frac{d}{p'}\},r=\infty).$
There exists a constant $C$ such that for all solutions $f\in L^{\infty}([0,T];{B}^s_{p,r})$ of \eqref{s1} with initial data $f_0$ in ${B}^s_{p,r}$, and $g$ in $L^1([0,T];{B}^s_{p,r})$, we have, for a.e. $t\in[0,T]$,
\begin{align}
\|f(t)\|_{{B}^s_{p,r}}\leq& C\Big(\|f_0\|_{{B}^s_{p,r}}+\int_0^t V'(t')\|f(t')\|_{{B}^s_{p,r}}+\|g(t')\|_{{B}^s_{p,r}}{\ud}t'\Big)\notag\\
\leq&
e^{C_2 V(t)}\Big(\|f_0\|_{{B}^s_{p,r}}+\int_0^t e^{-C_2 V(t')}\|g(t')\|_{{B}^s_{p,r}}{\ud}t'\Big),
\end{align}
where $V(t)=\int_{0}^{t}\|\nabla v\|_{{B}^{\frac{d}{p}}_{p,r}\cap L^{\infty}}ds($if $s=1+\frac{1}{p},r=1$, $V'(t)=\int_{0}^{t}\|\nabla v\|_{{B}^{\frac{d}{p}}_{p,1}}ds).$
\end{lemm}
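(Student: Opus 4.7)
\textbf{Plan of proof for Lemma \ref{priori estimate}.} The strategy is the classical Bahouri--Chemin--Danchin approach to transport estimates in Besov spaces: localize the equation in frequency with $\Delta_j$, estimate each dyadic piece in $L^p$ along the flow of $v$, absorb the loss of regularity into a commutator, and then recombine via the weighted $\ell^r$ summation followed by a Gronwall step.

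First, I would apply $\Delta_j$ to \eqref{s1} to obtain
\begin{equation*}
\pa_t \Delta_j f + v\cdot\na \Delta_j f = \Delta_j g + R_j, \qquad R_j := [v\cdot\na,\Delta_j]f.
\end{equation*}
Using that $v$ is (morally) divergence free and a standard $L^p$ energy-type argument along the characteristics of $v$ (or, equivalently, the Danchin lemma for transport equations at the $L^p$ level), I would derive
\begin{equation*}
\|\Delta_j f(t)\|_{L^p} \le \|\Delta_j f_0\|_{L^p} + \int_0^t \bigl(\|\Delta_j g(t')\|_{L^p} + \|R_j(t')\|_{L^p}\bigr)\,\ud t'.
\end{equation*}
The heart of the argument is then the commutator estimate
\begin{equation*}
\bigl\|\bigl(2^{js}\|R_j\|_{L^p}\bigr)_j\bigr\|_{\ell^r} \le C\,\|\na v\|_{B^{d/p}_{p,r}\cap L^\infty}\,\|f\|_{B^s_{p,r}},
\end{equation*}
which is valid precisely in the range of $(s,p,r)$ stated in the lemma. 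This is proved by Bony's paraproduct decomposition $v\cdot\na f = T_v\na f + T_{\na f}v + R(v,\na f)$, writing each $R_j$ as a bilinear expression that is either a paraproduct commutator (estimated via a first-order Taylor expansion of $\varphi(2^{-j}\cdot)$ and a mean-value/convolution bound producing the gain $2^{-j}\|\na v\|_{L^\infty}$) or a remainder term that is handled directly using the almost orthogonality of frequency supports.

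After multiplying the $L^p$ bound by $2^{js}$, taking $\ell^r$ in $j$, and applying Minkowski's inequality in time, the commutator estimate yields
\begin{equation*}
\|f(t)\|_{B^s_{p,r}} \le C\Bigl(\|f_0\|_{B^s_{p,r}} + \int_0^t \|g(t')\|_{B^s_{p,r}}\,\ud t' + \int_0^t V'(t')\,\|f(t')\|_{B^s_{p,r}}\,\ud t'\Bigr),
\end{equation*}
which is the first inequality. The exponential form is then obtained by Gronwall's lemma applied to $t\mapsto e^{-C_2 V(t)}\|f(t)\|_{B^s_{p,r}}$.

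The genuinely delicate step is the commutator bound at the endpoint $s=1+d/p$ with $r=1$, because there the ``natural'' Besov product estimate would lose a logarithm; the remedy is to exploit that $r=1$ makes the $\ell^1$ summation of the remainder $R(v,\na f)$ exactly convergent, so one can drop the $L^\infty$ piece of $\na v$ in $V'$ and replace $B^{d/p}_{p,1}\cap L^\infty$ by $B^{d/p}_{p,1}$. The other endpoint $s=\max\{-d/p,-d/p'\}$, $r=\infty$ is symmetric and handled by duality on the paraproduct remainder. I expect this endpoint commutator analysis to be the main technical obstacle; everything else (the $L^p$ transport step, the recombination, and Gronwall) is routine.
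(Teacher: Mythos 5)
Your plan reproduces the standard Bahouri--Chemin--Danchin argument (frequency localization, $L^p$ transport estimate along the flow, the commutator bound for $[v\cdot\nabla,\Delta_j]f$ via Bony decomposition, then $\ell^r$ summation and Gronwall), which is precisely the proof in the cited reference; the paper itself gives no proof and simply quotes the result from \cite{book}. Your outline is correct and matches that source, including the reason the endpoint $s=1+\frac{d}{p}$, $r=1$ admits $V'$ measured in $B^{\frac{d}{p}}_{p,1}$ alone.
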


\begin{rema}\label{priori estimate1}\cite{book}
If ${\rm{div}}v=0$, we can get the same result with a better indicator: $\max\{-\frac{d}{p},-\frac{d}{p'}\}-1<s<\frac{d}{p}+1($or $s=\max\{-\frac{d}{p},-\frac{d}{p'}\}-1,r=\infty).$
\end{rema}

\begin{lemm}\label{priori estimate0}\cite{book}
Let ${\rm{div}} v=0$.
There exists a constant $C$ such that for all solutions $f\in L^{\infty}([0,T];{B}^0_{p,r})$ of \eqref{s1} with initial data $f_0$ in ${B}^0_{p,r}$, and $g$ in $L^1([0,T];{B}^s_{p,r})$, we have, for all $1\leq p,r\leq \infty$ and $t\in[0,T]$,
\begin{align}
\|f(t)\|_{{B}^s_{p,r}}\leq& C(1+\int_0^t V'(t')dt')(\|f_0\|_{{B}^s_{p,r}}+\int_0^t\|g(t')\|_{{B}^s_{p,r}}{\ud}t'),
\end{align}
where $V'(t)=\int_{0}^{t}\|\nabla v(t)\|_{L^{\infty}}ds.$
\end{lemm}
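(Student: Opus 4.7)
The plan is to prove this Vishik-type linear-in-Lipschitz estimate by localizing \eqref{s1} in frequency, running an $L^p$ energy estimate on each dyadic block, and controlling the commutator sharply enough to avoid the logarithmic loss that would otherwise force an exponential factor in $V(t):=\int_0^t\|\na v(\tau)\|_{L^\infty}\,d\tau$.

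First I would apply $\Delta_j$ to \eqref{s1}, obtaining
\begin{equation*}
\pa_t\Delta_j f+v\cdot\na\Delta_j f=\Delta_j g+R_j,\qquad R_j:=[v\cdot\na,\Delta_j]f.
\end{equation*}
Since $\dive v=0$, the usual $L^p$ energy estimate (multiplying by $|\Delta_j f|^{p-2}\Delta_j f$ and integrating, with the standard adaptation for $p=1,\infty$) absorbs the transport term and yields
\begin{equation*}
\|\Delta_j f(t)\|_{L^p}\le\|\Delta_j f_0\|_{L^p}+\int_0^t\bigl(\|\Delta_j g(\tau)\|_{L^p}+\|R_j(\tau)\|_{L^p}\bigr)\,d\tau.
\end{equation*}
Using Bony's decomposition of $v\cdot\na f$, the commutator $R_j$ splits into a near-diagonal piece $[S_{j-1}v\cdot\na,\Delta_j]f$ plus paraproduct and remainder contributions localized away from frequency $2^j$. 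The near-diagonal term is written as a convolution against $2^{jd}h(2^j\cdot)$ and estimated via the first-order Taylor bound $|S_{j-1}v(x)-S_{j-1}v(y)|\le|x-y|\,\|\na v\|_{L^\infty}$; the remaining pieces are handled by the classical paraproduct rules together with the finite-band property of $\Delta_j$. Multiplying by $2^{js}$ and taking the $\ell^r$ norm yields
\begin{equation*}
\bigl\|(2^{js}\|R_j(\tau)\|_{L^p})_j\bigr\|_{\ell^r}\le C\,\|\na v(\tau)\|_{L^\infty}\,\|f(\tau)\|_{B^s_{p,r}}.
\end{equation*}

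Combining the two displays produces the integral inequality
\begin{equation*}
\|f(t)\|_{B^s_{p,r}}\le\|f_0\|_{B^s_{p,r}}+\int_0^t\|g(\tau)\|_{B^s_{p,r}}\,d\tau+C\int_0^t\|\na v(\tau)\|_{L^\infty}\,\|f(\tau)\|_{B^s_{p,r}}\,d\tau.
\end{equation*}
The main obstacle, and what distinguishes this lemma from the routine Gronwall-based estimate in Lemma \ref{priori estimate}, is that a direct Gronwall application here would produce the exponential factor $e^{CV(t)}$, which is strictly weaker than the claimed linear $(1+V(t))$. To recover the linear bound I would invoke Vishik's refinement: replace the fixed cutoff $S_{j-1}$ in the paraproduct decomposition by a time-dependent cutoff $S_{N(t)}$, with $N(t)$ chosen so that $2^{N(t)}$ balances $V(t)$, and then carefully re-sum the resulting high- and low-frequency pieces of $R_j$. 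This kills the logarithm that would otherwise enter the high-frequency tail and turns the multiplicative $V(t)$-factor from exponential into linear; once this sharper commutator bookkeeping is in place, the conclusion follows by elementary manipulations in $\ell^r$.
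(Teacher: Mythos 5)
The paper offers no proof of this lemma --- it is quoted from \cite{book} (Theorem 3.18 there, due to Vishik and to Hmidi--Keraani) --- so the only comparison available is with that reference proof. Your set-up is fine up to the integral inequality $\|f(t)\|_{B^0_{p,r}}\le \|f_0\|_{B^0_{p,r}}+\int_0^t\|g\|_{B^0_{p,r}}\,d\tau + C\int_0^t\|\na v\|_{L^\infty}\|f\|_{B^0_{p,r}}\,d\tau$, and you correctly diagnose that Gronwall applied to it only yields $e^{CV(t)}$. The gap is in your final step: that integral inequality, by itself, simply does not imply the linear bound (take $g=0$ and note that $y'=V'y$ saturates it with exponential growth), so no reworking of the commutator $R_j=[v\cdot\na,\Delta_j]f$ --- with a fixed or a time-dependent paraproduct cutoff --- can close the argument; the bound $\|(2^{js}\|R_j\|_{L^p})_j\|_{\ell^r}\le C\|\na v\|_{L^\infty}\|f\|_{B^s_{p,r}}$ is already sharp at regularity $s=0$. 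You must inject information that is not contained in that single inequality, and your sketch never identifies what that information is.

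The missing idea is the superposition argument: write $f=\sum_{j'}f_{j'}$, where $f_{j'}$ solves the transport equation with data $\Delta_{j'}f_0$ and force $\Delta_{j'}g$. Because $\dive v=0$, each $f_{j'}$ obeys the \emph{time-uniform} bound $\|f_{j'}(t)\|_{L^p}\le\|\Delta_{j'}f_0\|_{L^p}+\int_0^t\|\Delta_{j'}g\|_{L^p}\,d\tau$ with no $V(t)$ factor --- this is where the divergence-free hypothesis really enters, not merely in absorbing the transport term of a dyadic energy estimate. Separately one proves the off-diagonal decay $\|\Delta_j f_{j'}(t)\|_{L^p}\le C2^{-|j-j'|}e^{CV(t)}\bigl(\|\Delta_{j'}f_0\|_{L^p}+\int_0^t\|\Delta_{j'}g\|_{L^p}\,d\tau\bigr)$, via Bernstein and propagation of $\na f_{j'}$ when $j\ge j'$, and by writing $\Delta_{j'}f_0$ as a divergence when $j\le j'$. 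Splitting $\sum_{j'}\|\Delta_jf_{j'}(t)\|_{L^p}$ into $|j-j'|\le N$ (where the uniform bound contributes a factor $N$) and $|j-j'|>N$ (where the decay contributes $2^{-N}e^{CV(t)}$), and choosing $N\approx 1+CV(t)/\ln 2$, produces the factor $1+V(t)$. It is this interplay between a time-uniform near-diagonal bound and an exponentially growing but geometrically decaying far-diagonal bound that yields linear growth; it cannot be reproduced by a single energy estimate on $\Delta_j f$, which is why your proposal as written does not reach the stated conclusion.
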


\begin{lemm}\cite{book}\label{Convergence}
Let $p\in[1,\infty) (p=\infty,~{\rm{div}} A^n=0)$. Define  $\overline{\mathbb{N}}=\mathbb{N}\cup \{\infty\}$. Suppose $f\in L^1([0,T];B^{\frac{d}{p}}_{p,1})$ and $a_0\in B^{\frac{d}{p}}_{p,1}$. For $n\in \overline{\mathbb{N}}$, denote by $a^n\in C([0,T];B^{\frac{d}{p}}_{p,1})$ the solution of
\begin{align}\label{an}
\left\{
\begin{array}{ll}
\partial_t a^n+A^n\cdot\nabla a^n=f,\\[1ex]
a^n|_{t=0}(x)=a_{0}(x).\\[1ex]
\end{array}
\right.
\end{align}
Assume that $\sup_{n\in\overline{\mathbb{N}} }\|A^n\|_{L^1_T(B^{\frac{d}{p}+1}_{p,1})}\leq C$. If $A^n\to A^\infty$ in $L^1_T(B^{\frac{d}{p}}_{p,1})$, then
the sequence $a^n\to a^\infty$ in $C([0,T];B^{\frac{d}{p}}_{p,1})$.
\end{lemm}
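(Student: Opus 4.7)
The natural first attempt is to set $w^n := a^n - a^\infty$, to write the transport equation
$$\partial_t w^n + A^n\cdot\nabla w^n = (A^\infty - A^n)\cdot\nabla a^\infty, \qquad w^n|_{t=0}=0,$$
and to apply Lemma~\ref{priori estimate} at $s=\tfrac{d}{p},\ r=1$. This fails at the critical index: bounding the source in $B^{d/p}_{p,1}$ by a product estimate would require one extra derivative on $a^\infty$, whereas $a^\infty$ is only given in $C_T(B^{d/p}_{p,1})$. The plan is to circumvent this by a smoothing/truncation argument that buys the missing derivative at the price of a $k$-dependent constant, while keeping the loss controllable via the summability built into $r=1$.

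Concretely, for each integer $k\geq 1$ set $a_0^{(k)}:=S_k a_0$, $f^{(k)}:=S_k f$, and let $a^{n,(k)}$ be the solution of \eqref{an} with data $(a_0^{(k)}, f^{(k)})$. Split
$$a^n - a^\infty = \bigl(a^n - a^{n,(k)}\bigr) + \bigl(a^{n,(k)} - a^{\infty,(k)}\bigr) + \bigl(a^{\infty,(k)} - a^\infty\bigr).$$
For the outer (tail) pieces, $v^n_k := a^n - a^{n,(k)}$ satisfies a transport equation with drift $A^n$, source $f - S_k f$, and initial datum $a_0 - S_k a_0$. The uniform bound $\sup_n\int_0^T\|\nabla A^n\|_{B^{d/p}_{p,1}}\,\ud t\leq C$ and Lemma~\ref{priori estimate} at $s=\tfrac{d}{p},\ r=1$ yield
$$\|v^n_k\|_{L^\infty_T(B^{d/p}_{p,1})} \leq C\bigl(\|a_0-S_k a_0\|_{B^{d/p}_{p,1}} + \|f-S_k f\|_{L^1_T(B^{d/p}_{p,1})}\bigr),$$
with $C$ independent of $n\in\overline{\mathbb{N}}$. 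Since the Littlewood--Paley decomposition of $a_0$ and $f$ is summable in the $r=1$ sense, dominated convergence makes this bound tend to $0$ as $k\to\infty$, uniformly in $n$.

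For the middle piece, fix $k$. The truncated data belong to $B^{d/p+1}_{p,1}$, so applying Lemma~\ref{priori estimate} now at $s=\tfrac{d}{p}+1,\ r=1$ gives a uniform-in-$n$ bound $\|a^{n,(k)}\|_{L^\infty_T(B^{d/p+1}_{p,1})}\leq C_k$. Then $W^n_k:=a^{n,(k)}-a^{\infty,(k)}$ solves
$$\partial_t W^n_k + A^n\cdot\nabla W^n_k = (A^\infty - A^n)\cdot\nabla a^{\infty,(k)}, \qquad W^n_k|_{t=0}=0,$$
and the source is now controlled in $B^{d/p}_{p,1}$: for $p<\infty$ by the algebra property of $B^{d/p}_{p,1}$, and for $p=\infty$ by rewriting $(A^\infty-A^n)\cdot\nabla a^{\infty,(k)}=\dive\bigl((A^\infty-A^n)\,a^{\infty,(k)}\bigr)$ and using a Bony decomposition that exploits the smoothness of $a^{\infty,(k)}$. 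In either case one obtains
$$\bigl\|(A^\infty-A^n)\cdot\nabla a^{\infty,(k)}\bigr\|_{L^1_T(B^{d/p}_{p,1})} \leq C\, C_k\,\|A^\infty-A^n\|_{L^1_T(B^{d/p}_{p,1})},$$
and Lemma~\ref{priori estimate} gives $\|W^n_k\|_{L^\infty_T(B^{d/p}_{p,1})}\to 0$ as $n\to\infty$ for each fixed $k$. Combining: given $\varepsilon>0$, first choose $k$ large so that the tail pieces are $<\varepsilon/3$ uniformly in $n$, then send $n\to\infty$ in the middle piece. Uniform convergence of continuous maps yields the desired convergence in $C([0,T];B^{d/p}_{p,1})$. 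The main obstacle is precisely the critical index: neither a direct product estimate nor plain interpolation between $B^{d/p-1}_{p,1}$ and $B^{d/p}_{p,1}$ closes the loop, and the role of the divergence-free assumption for $p=\infty$ is to compensate for the failure of $B^0_{\infty,1}$ to be an algebra.
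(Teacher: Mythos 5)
Your proposal is correct, and it is essentially the standard argument for this stability result: the paper itself gives no proof (it cites \cite{book}), and the proof there proceeds exactly by your scheme of truncating the data with $S_k$ to gain one derivative, establishing tail estimates uniform in $n$ via the critical transport estimate (Lemma \ref{priori estimate}, with Remark \ref{priori estimate1} supplying the $s=0$, $r=1$ case when $p=\infty$ and ${\rm div}\,A^n=0$), and passing to the limit in $n$ for each fixed $k$ before an $\varepsilon/3$ argument. The only cosmetic remark is that for $p=\infty$ the divergence-free hypothesis is needed chiefly to apply the transport estimate at the endpoint $s=0$ with $r=1$, while the product $(A^\infty-A^n)\cdot\nabla a^{\infty,(k)}$ is already controlled in $B^0_{\infty,1}$ by the Bony decomposition thanks to the extra smoothness of $a^{\infty,(k)}$; this does not affect the validity of your argument.
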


\begin{defi}\cite{book}
Let $a>0$, $\mu(r)$ be a continue non-zero and non-decreasing function from $[0,a]$ to $\mathbb{R}^+$, $\mu(0)=0$. We say that $\mu$ is an Osgood modulus of continuity if
$$\int_{0}^{a}\frac{1}{\mu(r)}{\ud}r=+\infty.$$
\end{defi}

\begin{lemm}\label{osgood}\cite{book}
Let $\rho$ be a measurable function from $[0,T]$ to $[0,a]$, $\gamma$ be a locally integrable function from $[0,T]$ to $\mathbb{R}^+$, and $\mu$ be an Osgood modulus of continuity. If for some $\rho_0\geq 0$,
$$\rho(t)\leq \rho_0+\int_{0}^{t}\gamma(s)\mu(\rho(s)){\ud}s\quad for\quad a.e.\quad t\in[0,T],$$
then we have
\begin{equation}
-M(\rho(t))+M(\rho_0)\leq \int_{0}^{t}\gamma(s){\ud}s\quad with\quad M(x)=\int_{x}^{a}\frac{{\ud}r}{\mu(r)}.
\end{equation}
If $\mu(r)=r$, we obtain the Gronwall inequality:
\begin{align}
\rho(t)\leq \rho_0e^{\int_{0}^{t}\gamma(s){\ud}s}.
\end{align}
If $\mu(r)=cr(1+\ln(e+r))$, it's easy to check that it is still an Osgood modulus of continuity and have
\begin{align}\label{osgood1}
\rho(t)\leq C\rho_0e^{cte^{\int_{0}^{t}c\gamma(s){\ud}s}}.
\end{align}
If $\mu(r)=r+r\ln(e+c/r)$, similarly, one has
\begin{align}\label{osgood2}
\rho(t)\leq C\rho_0\frac{e^{e^{\int_{0}^{t}\gamma(s){\ud}s}}}{c-\rho_0(e^{\int_{0}^{t}\gamma(s){\ud}s}-e)}\leq C\rho_0e^{e^{\int_{0}^{t}\gamma(s){\ud}s}}.
\end{align}
where the second inequality holds for sufficient small $\rho_0$. 
\end{lemm}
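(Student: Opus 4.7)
The plan is as follows. First I would set up a comparison function. Define
\[
R(t) := \rho_0 + \int_0^t \gamma(s)\mu(\rho(s))\,{\ud}s,
\]
which is absolutely continuous, non-decreasing, satisfies $R(0) = \rho_0$ and $\rho(t) \le R(t)$ pointwise. Since $\mu$ is non-decreasing,
\[
R'(t) = \gamma(t)\mu(\rho(t)) \le \gamma(t)\mu(R(t))\qquad \text{for a.e. } t \in [0,T].
\]

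For $\rho_0 > 0$, dividing by $\mu(R(t)) > 0$ and integrating yields, via the change of variable $r=R(s)$,
\[
-M(R(t)) + M(\rho_0) = \int_{\rho_0}^{R(t)} \frac{{\ud}r}{\mu(r)} \le \int_0^t \gamma(s)\,{\ud}s.
\]
Because $M$ is strictly decreasing on $(0,a]$ and $\rho(t) \le R(t)$, one has $-M(\rho(t)) \le -M(R(t))$, which gives the displayed inequality. When $\rho_0 = 0$, I would apply the bound with $\rho_0$ replaced by $\varepsilon > 0$ and let $\varepsilon \to 0^+$; the Osgood condition $\int_0^a \frac{{\ud}r}{\mu(r)} = +\infty$ forces $M(\varepsilon) \to +\infty$, hence $\rho \equiv 0$ and the inequality is vacuous.

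The three closed-form corollaries then follow by computing $M$ explicitly and inverting. The case $\mu(r)=r$ gives $M(x)=\ln(a/x)$, and exponentiating reproduces the classical Gronwall bound. For the iterated-logarithm modulus $\mu(r)=cr(1+\ln(e+r))$ I would work directly with the comparison ODE $R' = c\gamma R(1+\ln(e+R))$: setting $v(t) := 1+\ln(e+R(t))$ one computes $v'(t) = c\gamma(t)\frac{R(t)}{e+R(t)} v(t) \le c\gamma(t) v(t)$, so $v(t)\le v(0)e^{c\int_0^t \gamma}$, and unfolding $R(t) \le \exp(v(t))-e$ produces the claimed double exponential, the residual polynomial $e^{ct}$ factor absorbing the lower-order $e/R$ correction. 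The third modulus $\mu(r) = r + r\ln(e+c/r)$ is treated in the same spirit by introducing $w(t) := \ln(e+c/R(t))$ and deriving a linear differential inequality for $w$.

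The main obstacle is this last step: carrying out the inversion $M^{-1}$ (equivalently, solving the comparison ODE) for the logarithmic moduli and tracking the constants cleanly enough to land precisely on \eqref{osgood1}--\eqref{osgood2}. The denominator $c-\rho_0(e^{\int\gamma}-e)$ appearing in \eqref{osgood2} encodes a range-of-validity condition on $t$ coming from the upper endpoint $a$ of the modulus; the simplification to the clean bound $C\rho_0 e^{e^{\int\gamma}}$ then requires $\rho_0$ small enough that this denominator stays bounded away from zero uniformly on the time interval of interest, which is exactly the smallness condition noted in the statement.
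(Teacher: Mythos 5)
The paper itself offers no proof of this lemma (it is quoted from \cite{book}), so there is nothing to compare against line by line; I can only assess your argument on its own terms. Your treatment of the core Osgood inequality is correct and standard: the comparison function $R$, the monotonicity of $\mu$, the change of variables giving $M(\rho_0)-M(R(t))\leq\int_0^t\gamma\,{\ud}s$, the monotonicity of $M$ to pass from $R$ back to $\rho$, and the limiting argument for $\rho_0=0$ are all fine, as is the $\mu(r)=r$ case.

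The gap is exactly where you flag it, and for \eqref{osgood1}--\eqref{osgood2} it is not just bookkeeping. For \eqref{osgood1}, your substitution $v=1+\ln(e+R)$ gives $v(t)\leq v(0)e^{c\int_0^t\gamma\,{\ud}s}$ with $v(0)=1+\ln(e+\rho_0)\geq 2$, hence a bound on $R(t)$ of the form $\exp\bigl(v(0)e^{c\int_0^t\gamma\,{\ud}s}\bigr)$ that does \emph{not} tend to $0$ as $\rho_0\to 0$; no ``residual $e^{ct}$ factor'' can recover the crucial linear prefactor $\rho_0$ in \eqref{osgood1}. The repair is elementary but different: since $e+r$ sits inside the logarithm, $\mu(r)=cr(1+\ln(e+r))\leq c(1+\ln(e+a))\,r$ on $[0,a]$, i.e.\ this modulus is actually Lipschitz there, and plain Gronwall already gives $\rho(t)\leq\rho_0 e^{c(1+\ln(e+a))\int_0^t\gamma\,{\ud}s}$, which is then weakened to the stated form. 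For \eqref{osgood2} the situation is worse: the modulus $r+r\ln(e+c/r)$ is genuinely log-Lipschitz at the origin, and your proposed substitution $w=\ln(e+c/R)$, carried out, yields $1+w(t)\geq(1+w(0))e^{-\int_0^t\gamma\,{\ud}s}$ and hence a bound of the type $\rho(t)\leq C\,c\,(\rho_0/c)^{\exp(-\int_0^t\gamma\,{\ud}s)}$ --- a H\"older, not linear, dependence on $\rho_0$. This is the sharp behaviour of the comparison ODE (solve $\rho'=\gamma\mu(\rho)$ with equality), and since $\rho_0^{\theta}/\rho_0\to\infty$ as $\rho_0\to 0$ for $\theta=e^{-\int_0^t\gamma\,{\ud}s}<1$, no argument of this type can land on the linear-in-$\rho_0$ bound $C\rho_0 e^{e^{\int_0^t\gamma\,{\ud}s}}$ claimed in \eqref{osgood2}; that estimate is not a consequence of the Osgood comparison as stated. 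So the ``main obstacle'' you identify is a genuine one, not a matter of tracking constants, and your proposal does not close it.
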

\par

\section{Local existence and uniqueness}

Without loss of generality, we prove the local well-posedness in $\mathbb{R}^d$, since the period case $\mathbb{S}^d$ is similar. For convenience, we denote $C_{E_0}\approx C(1+E_0+e^{E_0})$ for $C$ large enough in the proof of Theorem \ref{theorem}. We divide the proof of local existence and uniqueness into four steps:

\textbf{Step 1: An iterative scheme.}

Define the first term $(u^0,b^0):=(e^{t\Delta}u_0,e^{t\Delta}b_0)$. Then we introduce a sequence $(u^n,b^n)$ with the initial data $(u^n_0,b^n_0)$ by solving the following linear transport equation and heat conductive equation:
\begin{equation}\label{sp1}
\left\{\begin{array}{lll}
u^{n+1}_t+u^{n}\nabla u^{n+1}=b^{n}\nabla b^{n}+\frac{\nabla {\rm{div}}}{-\Delta}(b^n\nabla b^n-u^n\nabla u^n),\\
b^{n+1}_t-\Delta b^{n+1}=-u^{n}\nabla b^{n}+b^{n}\nabla u^{n} ,\\
(u^n_0,b^n_0):=({S}_nu_0,{S}_nb_0),
\end{array}\right.
\end{equation}
where ${S}_ng:=\sum_{k<n}{\Delta}_k g$ which makes sense in nonhomogeneous Besov spaces.

\textbf{Step 2: Uniform estimates.}

Taking advantage of Lemmas \ref{heat}--\ref{priori estimate}, we can obtain the uniform boundness of the approximate solution sequences $(u^n,b^n)\in E^p_T$. Now we claim that there exists a $T$ independent of $n$ such that the approximate solutions $(u^n,b^n)$ satisfy the following estimations :
$$(H_1):\quad \|b^{n}\|_{L^{\infty}_T ({B}^{\frac{d}{p}}_{p,1})}+\|u^{n}\|_{L^{\infty}_T ({B}^{\frac{d}{p}-1}_{p,1})}\leq 6E_0, $$
$$\quad\quad\quad~~~(H_2):\quad \|b^{n}\|_{A_T}\leq 2a,\quad A_T:={L^{2}_T({B}^{\frac{d}{p}+1}_{p,1})\cap L^{1}_T({B}^{\frac{d}{p}+2}_{p,1}}),$$
where $E_0:=\|b_0\|_{{B}^{\frac{d}{p}}_{p,1}}+\|u_0\|_{{B}^{\frac{d}{p}+1}_{p,1}}$. Now we suppose that $a$ is small enough such that 
\begin{align}\label{xxiugailsp2}
a\approx \frac{1}{24C},
\end{align}
\begin{align}\label{lsp2}
T\leq  \min\{1,\frac{1}{(96CE_0)^2},\frac{1}{(96Ca)^2},\frac{1}{72CE_0},\frac{\ln2}{12CE_0}\} \quad\text{and} \quad \|e^{t\Delta}u_0\|_{A_T}\leq a,
\end{align}
where $C=\max\{C_1,C_2\}$ and $C_1,C_2$  are the constants in Lemma \ref{heat}--Lemma\ref{priori estimate} (Indeed, we can take $C$ more large as we need).

It's easy to check that $(H_1)-(H_2)$ hold true for $n=0$. Now we will show that if $(H_1)-(H_2)$ hold true for $n$, then they hold true for $n+1$. In fact, \eqref{xxiugailsp2}--\eqref{lsp2} and Lemma \ref{heat}--Lemma\ref{priori estimate} together yield
\begin{align*}
\|b^{n+1}\|_{A_T}\leq& \|e^{t\Delta}b_0\|_{A_T}+C_1(1+T^{\frac{1}{p}})\|-u^{n}\nabla b^{n}+b^{n}\nabla u^{n}\|_{L^{1}_T({B}^{\frac{d}{p}}_{p,1})}\\
\leq& a+12CT^{\frac{1}{2}}E_0a\leq 2a,
\end{align*}
\begin{align*}
\|u^{n+1}\|_{L^{\infty}_T{B}^{\frac{d}{p}+1}_{p,1}}\leq& e^{6TE_0C}(\|u_0\|_{{B}^{\frac{d}{p}+1}_{p,1}}+\|b^{n}\nabla b^{n}+\frac{\nabla div}{-\Delta}(b^n\nabla b^n-u^n\nabla u^n)\|_{L^{1}_T{B}^{\frac{d}{p}-1}_{p,1}})
\\
\leq& e^{6TE_0C}E_0+C(\|b^{n}\|_{{B}^{\frac{d}{p}}_{p,1}}\|b^{n}\|_{L^1{B}^{\frac{d}{p}+2}_{p,1}}+T\|u^{n}\|^2_{L^{\infty}{B}^{\frac{d}{p}}_{p,1}})\\
\leq& 2E_0+12CE_0a+36CTE^2_0\leq 3E_0
\end{align*}
and
\begin{align*}
\|b^{n+1}\|_{L^{\infty}_T({B}^{\frac{d}{p}}_{p,1})}\leq& \|e^{t\Delta}b_0\|_{{B}^{\frac{d}{p}}_{p,1}}+\|-u^{n}\nabla b^{n}+b^{n}\nabla u^{n}\|_{L^{1}_T({B}^{\frac{d}{p}}_{p,1})}
\\
\leq& \|e^{t\Delta}b_0\|_{{B}^{\frac{d}{p}}_{p,1}}+CT^{\frac{1}{2}}\|u^{n}\|_{L^{\infty}_T({B}^{\frac{d}{p}+1}_{p,1})}\|b^{n}\|_{L^{2}_T({B}^{\frac{d}{p}+1}_{p,1})}\\
\leq&E_0+12CT^{\frac{1}{2}}E_0a=3E_0,
\end{align*}
which implies $(H_1)-(H_2)$ hold true for $n+1$.

Then, we need to obtain the relationship between the existence time $T$ and the initial data by \eqref{lsp2}. It is easy to deduce that
$$T\leq T_0:=\min\{1,\frac{1}{(96CE_0)^2},\frac{1}{(96Ca)^2},\frac{1}{72CE_0},\frac{\ln2}{12CE_0}\}.$$
Now we turn to study the condition $\|e^{t\Delta}u_0\|_{A_T}\leq a$ of (\ref{lsp2}). For this purpose, we have to classify the initial data.\\
(1) For $\|u_0\|_{{B}^{\frac{d}{p}-1}_{p,1}}\leq a$, we have
$$\|e^{t\Delta}u_0\|_{A_T}\leq \|u_0\|_{{B}^{\frac{d}{p}-1}_{p,1}}\leq a. $$
(2) For $\|u_0\|_{{B}^{\frac{d}{p}-1}_{p,1}}> a$, since $b_0\in{B}^{\frac{d}{p}}_{p,1}$, there exists an integer $j_0$ such that ($j_0$ may not be unique):
\begin{align}\label{lsp5.5}
\sum_{|j|\geq j_0}\|{\Delta}_ju_0\|_{L^p}2^{\frac{d}{p}j}< \frac{a}{4}.
\end{align}
Defining  $T_1:=\frac{a}{4}\frac{1}{2^{2j_0}\|u_0\|_{{B}^{\frac{d}{p}-1}_{p,1}}}$ and $T_2:=\frac{a^2}{4^2}\frac{1}{2^{2j_0}\|u_0\|^2_{{B}^{\frac{d}{p}-1}_{p,1}}}$, we get
\begin{align}\label{lsp6}
&\|e^{t\Delta}u_0\|_{L^{1}_{T_1}({B}^{\frac{d}{p}+2}_{p,1})}                         \notag\\
&\leq \sum_{|j|\leq j_0}\int_{0}^{T_1}\|e^{t\Delta}\dot{\Delta}_ju_0\|_{L^p}2^{(\frac{d}{p}+2)j}{\ud}t+\sum_{|j|> j_0}\int_{0}^{T_1}e^{-t2^{2j}}\|\dot{\Delta}_ju_0\|_{L^p}2^{(\frac{d}{p}+2)j}{\ud}t         \notag\\
&\leq 2^{2j_0}\sum_{|j|\leq j_0}\int_{0}^{T_1}\|\dot{\Delta}_ju_0\|_{L^p}2^{(\frac{d}{p}1)j}{\ud}t+\sum_{|j|> j_0}\int_{0}^{T_1}e^{-t2^{2j}}\|\dot{\Delta}_ju_0\|_{L^p}2^{(\frac{d}{p}+2)j}dt         \notag\\
&\leq 2^{2j_0}T_1\|u_0\|_{{B}^{\frac{d}{p}}_{p,1}}+\sum_{|j|> j_0}(1-e^{-T_22^{2j}})\|\dot{\Delta}_ju_0\|_{L^p}2^{(\frac{d}{p})j}                    \notag\\
&\leq 2^{2j_0}T_1\|u_0\|_{{B}^{\frac{d}{p}}_{p,1}}+\sum_{|j|> j_0}\|\dot{\Delta}_ju_0\|_{L^p}2^{(\frac{d}{p})j}\leq \frac{a}{2}
\end{align}
and
\begin{align}\label{lsp7}
&\|e^{t\Delta}u_0\|_{L^{2}_{T_2}({B}^{\frac{d}{p}+1}_{p,1})}\notag \\
&\leq \sum_{|j|\leq j_0}[\int_{0}^{T_2}\|e^{t\Delta}\dot{\Delta}_ju_0\|^2_{L^p}{\ud}t]^{\frac{1}{2}}2^{(\frac{d}{p}+1)j}+\sum_{|j|> j_0}[\int_{0}^{T_2}(e^{-t2^{2j}}\|\dot{\Delta}_ju_0\|_{L^p})^2{\ud}t]^{\frac{1}{2}} 2^{(\frac{d}{p}+1)j}\notag\\
&\leq 2^{j_0}T^{\frac{1}{2}}_2\|u_0\|_{{B}^{\frac{d}{p}}}+\sum_{|j|> j_0}(1-e^{-T_22^{2j}})^{\frac{1}{2}}\|\dot{\Delta}_ju_0\|_{L^p}2^{\frac{d}{p}j}        \notag \\
&\leq 2^{j_0}T^{\frac{1}{2}}_2\|u_0\|_{{B}^{\frac{d}{p}}}+\sum_{|j|> j_0}\|\dot{\Delta}_ju_0\|_{L^p}2^{\frac{d}{p}j}\leq \frac{a}{2}.
\end{align}
Letting $T=\min\{T_0,T_1,T_2\}$, we see
$$\|e^{t\Delta}u_0\|_{A_T}\leq a.$$

Finally, if we choose $T$ satisfying
\begin{equation}\label{lsp8}
  T=\begin{cases}
    T_0, & \|u_0\|_{{B}^{\frac{d}{p}-1}_{p,1}}\leq a,   \\
    \min\{T_0, T_1, T_2\}, & \|u_0\|_{{B}^{\frac{d}{p}-1}_{p,1}}>a,
  \end{cases}
\end{equation}
then \eqref{lsp2} holds. For this $T$, we find the approximate sequence $(u^n,b^n)$ is uniformly bounded in $E^p_T$.

\begin{rema}\label{j0}
From \eqref{lsp8}, we know that if the initial data is small, the local existence time $T$ depends only on $E_0$. However, for large initial data, the local existence time $T$ depends on both $E_0$ and the $j_0$ which satisfies \eqref{lsp5.5}.
\end{rema}

\textbf{Step 3: Existence of a solution.}

In this step, we will adopt the compactness argument in Besov spaces for the approximate sequence $(u^n,b^n)$ to get some solution $(u,b)$ of \eqref{sp00}, which is similar to the process in \cite{book,Li1,miao}. In fact, it is a routine route to verify that $(u,b)$ satisfies \eqref{sp00}, and here we omit it. Then, following the similar arguement of Theorem 3.19 in \cite{book}, we can prove $(u,b)\in E^p_T$.

\textbf{Step 4: Uniqueness.}

\begin{prop}\label{uniqueness}
Let $d\geq 2,~p\in[1,\infty]$. Suppose that $(u^1,b^1),(u^2,b^2)\in E^p_T$ are two corresponding solutions to \eqref{sp00} given by Step 1 with the initial data $(u^1_{0},b^1_{0})$ and $(u^2_{0},b^2_{0})$ respectively. Denote $\delta u=u^1-u^2$ and $\delta b=b^1-b^2$. Then for any $t\in[0,T]$, we have
\begin{align}\label{uniquegj1}
\|\delta b\|_{L^{\infty}_T ({B}^{\frac{d}{p}}_{p,1})\cap L^{1}_T ({B}^{\frac{d}{p}+2}_{p,1})}+\|\delta u\|_{L^{\infty}_T ({B}^{\frac{d}{p}+1}_{p,1})}\leq
e^{A(T)}(\|\delta b_0\|_{{B}^{\frac{d}{p}}_{p,1}}+\|\delta u_0\|_{{B}^{\frac{d}{p}+1}_{p,1}}+\int_{0}^{T}\|u^{2}\|_{{B}^{\frac{d}{p}+2}_{p,1}}\|\delta u\|_{{B}^{\frac{d}{p}+1}_{p,1}}{\ud}\tau).
\end{align}
Moreover, if $\|\delta b_0\|_{{B}^{\frac{d}{p}-1}_{p,1}}+\|\delta u_0\|_{{B}^{\frac{d}{p}}_{p,1}}$ is small enough, we have
\begin{align}\label{uniquegj2}
\|\delta b\|_{L^{\infty}_T ({B}^{\frac{d}{p}-1}_{p,\infty})\cap L^{1}_T ({B}^{\frac{d}{p}+1}_{p,\infty})}+\|\delta u\|_{L^{\infty}_T ({B}^{\frac{d}{p}}_{p,\infty})}\leq
e^{e^{A(T)}}(\|\delta b_0\|_{{B}^{\frac{d}{p}-1}_{p,1}}+\|\delta u_0\|_{{B}^{\frac{d}{p}}_{p,1}}),
\end{align}
where $A(T):=C_{\bar{E}_0}\int_{0}^{T}\|u^1\|_{{B}^{\frac{d}{p}+1}_{p,1}}+\|u^2\|_{{B}^{\frac{d}{p}+1}_{p,1}}+\|b^1\|_{{B}^{\frac{d}{p}+2}_{p,1}}+\|b^1\|_{{B}^{\frac{d}{p}+2}_{p,1}}{\ud}\tau$ and $\bar{E}_0:=\|b^1_0,b^2_0\|_{{B}^{\frac{d}{p}}_{p,1}}+\|u^1_0,u^2_0\|_{{B}^{\frac{d}{p}+1}_{p,1}}.$
\end{prop}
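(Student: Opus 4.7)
The starting point is to subtract the two systems satisfied by $(u^1,b^1)$ and $(u^2,b^2)$ and rearrange so that only the linear transport by the fixed field $u^1$ appears on the left:
\begin{align*}
&\partial_t \delta u + u^1\cdot\nabla \delta u = -\delta u\cdot\nabla u^2 + b^1\cdot\nabla \delta b + \delta b\cdot\nabla b^2 - \nabla\delta P,\\
&\partial_t \delta b - \Delta\delta b + u^1\cdot\nabla \delta b = -\delta u\cdot\nabla b^2 + b^1\cdot\nabla \delta u + \delta b\cdot\nabla u^2.
\end{align*}
Boundedness of the Leray projector on Besov spaces absorbs $\nabla\delta P$ harmlessly, so both equations fall under the scope of Lemma \ref{priori estimate} (pure transport) for $\delta u$ and Lemma \ref{heat} (heat equation) for $\delta b$. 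All that remains is to estimate the bilinear forcings through standard product laws.

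For the first bound \eqref{uniquegj1}, the plan is to apply Lemma \ref{priori estimate} at the critical endpoint $s=\frac{d}{p}+1,\ r=1$ to $\delta u$ and Lemma \ref{heat} at $s=\frac{d}{p}$ to $\delta b$, after moving $u^1\cdot\nabla\delta b$ to the forcing side and controlling it via the product law combined with the interpolation $\|\delta b\|_{\dot{B}^{\frac{d}{p}+1}_{p,1}}\leq\|\delta b\|_{\dot{B}^{\frac{d}{p}}_{p,1}}^{1/2}\|\delta b\|_{\dot{B}^{\frac{d}{p}+2}_{p,1}}^{1/2}$. Every bilinear contribution closes naturally in terms of the integrand of $A(T)$ except $\delta u\cdot\nabla u^2$: the product law in the algebra $\dot{B}^{\frac{d}{p}+1}_{p,1}$ inevitably produces a factor $\|u^2\|_{\dot{B}^{\frac{d}{p}+2}_{p,1}}$, which is not a priori time-integrable, and this is precisely the leftover term written explicitly on the right-hand side of \eqref{uniquegj1}. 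Summing the two estimates and invoking Gronwall concludes this part.

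For \eqref{uniquegj2}, I would repeat the scheme one derivative lower, namely in $\dot{B}^{\frac{d}{p}}_{p,\infty}$ for $\delta u$ and $\dot{B}^{\frac{d}{p}-1}_{p,\infty}$ for $\delta b$. Remark \ref{priori estimate1} (using $\mathrm{div}\,u^1=0$) legitimates the transport estimate in this non-endpoint range with $r=\infty$, and the troublesome factor $\|u^2\|_{\dot{B}^{\frac{d}{p}+2}_{p,1}}$ that obstructed \eqref{uniquegj1} is now replaced by $\|u^2\|_{\dot{B}^{\frac{d}{p}+1}_{p,1}}$, which is integrable in time. The price of moving to $r=\infty$ is a logarithmic loss appearing through terms of the form $\|\delta u\|_{L^\infty}\|\nabla u^2\|_{\dot{B}^{\frac{d}{p}}_{p,\infty}}$, for which one invokes the standard logarithmic interpolation
\[\|\delta u\|_{\dot{B}^{\frac{d}{p}}_{p,1}}\leq C\|\delta u\|_{\dot{B}^{\frac{d}{p}}_{p,\infty}}\ln\!\Big(e+\frac{\|\delta u\|_{\dot{B}^{\frac{d}{p}+1}_{p,1}}}{\|\delta u\|_{\dot{B}^{\frac{d}{p}}_{p,\infty}}}\Big).\]
Inserting this into the combined transport/heat inequality yields an integral inequality with Osgood modulus $\mu(r)=r+r\ln(e+c/r)$, whose conclusion is precisely the double exponential \eqref{osgood2} of Lemma \ref{osgood}.

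The main obstruction is this final Osgood step: the smallness hypothesis on $\|\delta b_0\|_{\dot{B}^{\frac{d}{p}-1}_{p,1}}+\|\delta u_0\|_{\dot{B}^{\frac{d}{p}}_{p,1}}$ is exactly what keeps the denominator $c-\rho_0(e^{\int\gamma}-e)$ in \eqref{osgood2} positive on $[0,T]$, so that the double exponential bound is finite. Specializing \eqref{uniquegj2} to $\delta u_0=\delta b_0=0$ then yields uniqueness, whereas \eqref{uniquegj1} will be used in the next section to upgrade this weak continuous dependence to the strong one via an additional frequency-truncation argument.
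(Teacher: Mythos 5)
Your proposal is correct and follows essentially the same route as the paper: difference system, transport estimate for $\delta u$ plus heat estimate for $\delta b$ with the non-integrable term $\|u^2\|_{{B}^{\frac{d}{p}+2}_{p,1}}\|\delta u\|_{{B}^{\frac{d}{p}+1}_{p,1}}$ left explicit in \eqref{uniquegj1}, then the same scheme one derivative lower with $r=\infty$, the logarithmic interpolation \eqref{interpolation}, and the Osgood lemma yielding the double exponential in \eqref{uniquegj2}. The only cosmetic difference is that you absorb the cross term $b\cdot\nabla\delta b$ via the interpolation $\|\delta b\|_{{B}^{\frac{d}{p}+1}_{p,1}}\leq\|\delta b\|^{1/2}_{{B}^{\frac{d}{p}}_{p,1}}\|\delta b\|^{1/2}_{{B}^{\frac{d}{p}+2}_{p,1}}$, whereas the paper uses the $L^2_T({B}^{\frac{d}{p}+1}_{p,1})$ norm and the smallness of $T$; both close the argument.
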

\begin{proof}
First, since $(u_1,b_1),(u_2,b_2)\in E^p_T$ are two corresponding solution to \eqref{sp00} given by Step 1, we can let $T$ be their common lifespan. Then, $(\delta u,\delta b)$ solves
\begin{equation}\label{unique1}
\left\{\begin{array}{lll}
(\delta u)_t+u^1\nabla(\delta u)+(\delta u)\nabla u^{2}=b^1\nabla (\delta b)+(\delta b)\nabla b^{2}-(\nabla P_1-\nabla P_2),\\
(\delta b)_t-\Delta(\delta b)+u^1\nabla(\delta b)+(\delta u)\nabla b^{2}=b^1\nabla(\delta u)+(\delta b)\nabla u^{2}  ,\\
(\delta u,\delta b)|_{t=0}=(u^1_0-u^2_0,b^1_0-b^2_0).
\end{array}\right.
\end{equation}
where $\nabla P_1-\nabla P_2=\frac{\nabla {\rm{div}}}{-\Delta}\big(b^1\nabla (\delta b) +(\delta b)\nabla b^2-u^1\nabla (\delta u) -(\delta u)\nabla u^2\big)$.
Lemma\ref{heat}--Lemma \ref{priori estimate} and \eqref{lsp2} give that
\begin{align}\label{unique2}
\|\delta u\|_{{B}^{\frac{d}{p}+1}_{p,1}}&\leq \|u^1_0-u^2_0\|_{{B}^{\frac{d}{p}+1}_{p,1}}\notag\\
&\quad +\int_{0}^{t}(\|u^1\|_{{B}^{\frac{d}{p}+1}_{p,1}}+\|u^{2}\|_{{B}^{\frac{d}{p}+2}_{p,1}})\|\delta u\|_{{B}^{\frac{d}{p}+1}_{p,1}}+\|b^1,b^2\|_{{B}^{\frac{d}{p}}_{p,1}}\|\delta b\|_{{B}^{\frac{d}{p}+2}_{p,1}}+\|b^1,b^2\|_{{B}^{\frac{d}{p}+2}_{p,1}}\|\delta b\|_{{B}^{\frac{d}{p}}_{p,1}}{\ud}\tau\notag\\
&\leq \|u^1_0-u^2_0\|_{{B}^{\frac{d}{p}-1}_{p,1}}+12E_0C\|\delta b\|_{L^1_T{B}^{\frac{d}{p}+2}_{p,1}}\notag\\
&\quad+\int_{0}^{t}(\|b^1,b^2\|_{{B}^{\frac{d}{p}+2}_{p,1}}+C_{E_0})(\|\delta u\|_{{B}^{\frac{d}{p}+1}_{p,1}}+\|\delta b\|_{{B}^{\frac{d}{p}}_{p,1}})+\|u^{2}\|_{{B}^{\frac{d}{p}+2}_{p,1}}\|\delta^nu\|_{{B}^{\frac{d}{p}+1}_{p,1}}{\ud}\tau
\end{align}
and
\begin{align}\label{unique3}
\|\delta b\|_{L^{\infty}_{T}{B}^{\frac{d}{p}}_{p,1}\cap L^2_T({B}^{\frac{d}{p}+1}_{p,1})\cap L^1_T({B}^{\frac{d}{p}+2}_{p,1})}
\leq& \|b^{1}_0-b^{2}_0\|_{{B}^{\frac{d}{p}}_{p,1}}\notag\\
&+\int_{0}^{t}\|b^1,b^{2}\|_{{B}^{\frac{d}{p}+1}_{p,1}}\|\delta u\|_{{B}^{\frac{d}{p}+1}_{p,1}}
+\|u^1,u^{2}\|_{{B}^{\frac{d}{p}+1}_{p,1}}\|\delta b\|_{{B}^{\frac{d}{p}+1}_{p,1}}{\ud}\tau\notag\\
\leq& \|b^{1}_0-b^{2}_0\|_{{B}^{\frac{d}{p}}_{p,1}}\notag\\
&+\int_{0}^{t}\|b^1,b^{2}\|_{{B}^{\frac{d}{p}+1}_{p,1}}\|\delta u\|_{{B}^{\frac{d}{p}+1}_{p,1}}{\ud}\tau
+ 12E_0T^{\frac{1}{2}}\|\delta b\|_{L^2_{T}({B}^{\frac{d}{p}+1}_{p,1})}\notag\\
\leq& \|b^{1}_0-b^{2}_0\|_{{B}^{\frac{d}{p}}_{p,1}}\notag\\
&+\int_{0}^{t}\|b^1,b^{2}\|_{{B}^{\frac{d}{p}+1}_{p,1}}\|\delta u\|_{{B}^{\frac{d}{p}+1}_{p,1}}{\ud}\tau
+ \frac{1}{4}\|\delta b\|_{L^2_{T}({B}^{\frac{d}{p}+1}_{p,1})}
\end{align}
Combining \eqref{unique2} and \eqref{unique3}$\times (24E_0+1)C$, we have by Gronwall's inequality
\begin{align*}
&\|\delta u\|_{L^{\infty}({B}^{\frac{d}{p}-1}_{p,1})}+\|\delta b\|_{L^{\infty}({B}^{\frac{d}{p}}_{p,1})\cap L^2_T({B}^{\frac{d}{p}+1}_{p,1})\cap L^1_T({B}^{\frac{d}{p}+2}_{p,1})}\\
&\leq e^{A(T)}\Big(\|b^{1}_0-b^{2}_0\|_{{B}^{\frac{d}{p}}_{p,1}}+\|u^1_0-u^2_0\|_{{B}^{\frac{d}{p}-1}_{p,1}}+\int_{0}^{T}\|u^{2}\|_{{B}^{\frac{d}{p}+2}_{p,1}}\|\delta^nu\|_{{B}^{\frac{d}{p}+1}_{p,1}}{\ud}\tau\Big).
\end{align*}
That is \eqref{uniquegj1}.

Then, similarly, according to Lemma \ref{heat}--\ref{priori estimate} and \eqref{lsp2}, we get
\begin{align}
\|\delta u\|_{{B}^{\frac{d}{p}}_{p,\infty}}\leq& \|u^1_0-u^2_0\|_{{B}^{\frac{d}{p}}_{p,\infty}}\notag\\
&+C\int_{0}^{t}\big(\|u^1,u^{2}\|_{{B}^{\frac{d}{p}+1}_{p,1}}\|\delta^nu\|_{{B}^{\frac{d}{p}}_{p,1}}+\|b^2\|_{{B}^{\frac{d}{p}+2}_{p,1}}\|\delta b\|_{{B}^{\frac{d}{p}-1}_{p,\infty}}+\|b^1,b^2\|_{{B}^{\frac{d}{p}}_{p,1}}\|\delta b\|_{{B}^{\frac{d}{p}+1}_{p,\infty}}\big){\ud}\tau\notag\\
\leq& \|u^1_0-u^2_0\|_{{B}^{\frac{d}{p}-1}_{p,1}}\notag\\
&+\int_{0}^{t}12E_0C\big(\|\delta u\|_{{B}^{\frac{d}{p}}_{p,1}}+\|b^2\|_{{B}^{\frac{d}{p}+2}_{p,1}}\|\delta b\|_{{B}^{\frac{d}{p}-1}_{p,\infty}}\big){\ud}\tau
+12E_0C\|\delta b\|_{L^1_T{B}^{\frac{d}{p}+1}_{p,\infty}}\label{unique4}
\end{align}
and
\begin{align}\label{unique5}
\|\delta b\|_{L^{\infty}_{T}({B}^{\frac{d}{p}}_{p,1})\cap L^2_T{B}^{\frac{d}{p}+1}_{p,1}\cap L^1_T{B}^{\frac{d}{p}+2}_{p,1}}
&\leq \|b^{1}_0-b^{2}_0\|_{{B}^{\frac{d}{p}}_{p,1}}
+\int_{0}^{t}\|b^1,b^{2}\|_{{B}^{\frac{d}{p}}_{p,1}}\|\delta u\|_{{B}^{\frac{d}{p}}_{p,1}}
+\|u^1,u^{2}\|_{{B}^{\frac{d}{p}+1}_{p,1}}\|\delta b\|_{{B}^{\frac{d}{p}}_{p,\infty}}{\ud}\tau\notag\\
&\leq \|b^{1}_0-b^{2}_0\|_{{B}^{\frac{d}{p}}_{p,\infty}}+\int_{0}^{t}\|b^1,b^{2}\|_{{B}^{\frac{d}{p}+1}_{p,1}}\|\delta u\|_{{B}^{\frac{d}{p}+1}_{p,1}}{\ud}\tau
+ 12E_0T^{\frac{1}{2}}\|\delta b\|_{L^2_{T}({B}^{\frac{d}{p}+1}_{p,\infty})}\notag\\
&\leq \|b^{1}_0-b^{2}_0\|_{{B}^{\frac{d}{p}}_{p,\infty}}+\int_{0}^{t}12E_0C\|\delta u\|_{{B}^{\frac{d}{p}+1}_{p,1}}{\ud}\tau
+ \frac{1}{4}\|\delta b\|_{L^2_{T}({B}^{\frac{d}{p}+1}_{p,\infty})}.
\end{align}
By use of \eqref{unique4} and \eqref{unique5}$\times 24E_0C+1$, we see from Gronwall's inequality that
\begin{align}\label{unique6}
&\|\delta u\|_{{B}^{\frac{d}{p}}_{p,\infty}}+\|\delta b\|_{{B}^{\frac{d}{p}-1}_{p,\infty}\cap L^2_T{B}^{\frac{d}{p}}_{p,\infty}\cap L^1_T{B}^{\frac{d}{p}+1}_{p,\infty}}\notag\\
&\leq C_{E_0}(\|b^{1}_0-b^{2}_0\|_{{B}^{\frac{d}{p}}_{p,1}}+\|u^1_0-u^2_0\|_{{B}^{\frac{d}{p}-1}_{p,1}}
+\int_{0}^{t}\|\delta u\|_{{B}^{\frac{d}{p}+1}_{p,1}}{\ud}\tau).
\end{align}
Taking advantage of the interpolation inequality, we have
\begin{align}\label{interpolation}
\|\delta u\|_{{B}^{\frac{d}{p}}_{p,1}}
\leq C\|\delta u\|_{{B}^{\frac{d}{p}}_{p,\infty}}\ln(e+\frac{\|\delta u\|_{{B}^{\frac{d}{p}+1}_{p,1}}}{\|\delta u\|_{{B}^{\frac{d}{p}}_{p,\infty}}}) ,
\end{align}
which together with \eqref{unique6} yields that
\begin{align}\label{sppp2}
&\|\delta u\|_{L^{\infty}_{T}({B}^{\frac{d}{p}}_{p,\infty})}+\|\delta b\|_{L^{\infty}_{T} ({B}^{\frac{d}{p}-1}_{p,\infty})\cap L^2_T ({B}^{\frac{d}{p}}_{p,\infty})\cap L^1_T ({B}^{\frac{d}{p}+1}_{p,\infty})}\notag\\
&\leq C_{E_0}(\|b^{1}_0-b^{2}_0\|_{ {B}^{\frac{d}{p}}_{p,1}}+\|u^1_0-u^2_0\|_{ {B}^{\frac{d}{p}-1}_{p,1}}
+ \int_{0}^{t}\|\delta u\|_{{B}^{\frac{d}{p}}_{p,\infty}}
\ln(e+\frac{C_{E_0}}{\|\delta u\|_{ {B}^{\frac{d}{p}}_{p,\infty}}}){\ud}\tau 
\end{align}
Letting $\mu(r)=r\ln(e+\frac{C_{E_0}}{r}),\gamma(s)=C_{E_0}$ in Lemma \ref{osgood}, we find
\begin{align*}
\|\delta u\|_{L^{\infty}_t({B}^{\frac{d}{p}}_{p,\infty})}+\|\delta b\|_{ {B}^{\frac{d}{p}-1}_{p,\infty}\cap L^2_T ({B}^{\frac{d}{p}}_{p,\infty})\cap L^1_T({B}^{\frac{d}{p}+1}_{p,\infty})}
\leq e^{e^{A(T)}}(\|b^{1}_0-b^{2}_0\|_{ {B}^{\frac{d}{p}}_{p,1}}+\|u^1_0-u^2_0\|_{ {B}^{\frac{d}{p}-1}_{p,1}}).
\end{align*}
That is \eqref{uniquegj2}.
\end{proof}

Appealing to \eqref{uniquegj2} in Proposition \ref{uniqueness} and setting $\|\delta b_0\|_{{B}^{\frac{d}{p}-1}_{p,1}}+\|\delta u_0\|_{{B}^{\frac{d}{p}}_{p,1}}=0$, one can easily obtain the uniqueness of \eqref{sp00}.

\section{Continuous dependence}

Before proving the continuous dependence of solutions to \eqref{sp00}, we need to prove that if $(u^n_0,b^n_0)$ tends to $(u_0,b_0)$ in ${B}^{\frac{d}{p}+1}_{p,1}\times {B}^{\frac{d}{p}}_{p,1}$, then there exists a lifespan $T^n$ corresponding to $(u^n_0,b^n_0)$ such that $T^n\rightarrow T$  where $T$ is a lifespan corresponding to the initial data $u_0$ in \eqref{lsp8}. This implies that $T-\epsilon$ (for some small $\epsilon$) is a common lifespan both for $(u^n,b^n)$ and $(u,b)$ when $n$ is sufficiently large. We first give a useful lemma:
\begin{lemm}\label{gj}
Let $(u_0,b_0)\in {B}^{\frac{d}{p}-1}_{p,1}\times{B}^{\frac{d}{p}}_{p,1}$ be the initial data of (\ref{sp00}) with $p\leq 2d$. If there exists another initial data $(u^n_0,b^n_0)\in {B}^{\frac{d}{p}+1}_{p,1}\times{B}^{\frac{d}{p}}_{p,1}$ such that $\|u^n_0-u_0\|_{{B}^{\frac{d}{p}+1}_{p,1}}+\|b^n_0-b_0\|_{{B}^{\frac{d}{p}}_{p,1}}\rightarrow 0\ (n\rightarrow\infty)$, then we can construct a lifespan $T^n$ corresponding to $(u^n_0,b^n_0)$ such that
$$T^n\rightarrow T,\quad\quad n\rightarrow\infty ,$$
where the lifespan $T$ corresponds to $(u_0,b_0)$.
\end{lemm}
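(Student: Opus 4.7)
The plan is to mimic, for each approximation $(u_0^n,b_0^n)$, the construction of the lifespan given in \eqref{lsp8}, and then to transfer it to the limit using the convergence assumption $\|u_0^n-u_0\|_{{B}^{\frac{d}{p}+1}_{p,1}}+\|b_0^n-b_0\|_{{B}^{\frac{d}{p}}_{p,1}}\to 0$.

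I would begin by setting $E_0^n:=\|u_0^n\|_{{B}^{\frac{d}{p}+1}_{p,1}}+\|b_0^n\|_{{B}^{\frac{d}{p}}_{p,1}}$, so that $E_0^n\to E_0$, and observing that the quantity $T_0$ of \eqref{lsp2} is an explicit continuous function of $E_0$; hence the corresponding $T_0^n$ satisfies $T_0^n\to T_0$ automatically. This already handles the first branch of \eqref{lsp8}: if $\|u_0\|_{{B}^{\frac{d}{p}-1}_{p,1}}<a$, then by the embedding ${B}^{\frac{d}{p}+1}_{p,1}\hookrightarrow {B}^{\frac{d}{p}-1}_{p,1}$ we also have $\|u_0^n\|_{{B}^{\frac{d}{p}-1}_{p,1}}<a$ for $n$ large, so declaring $T^n:=T_0^n$ gives $T^n\to T_0=T$.

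For the second branch, $\|u_0\|_{{B}^{\frac{d}{p}-1}_{p,1}}>a$, the key point is to produce a single integer $j_0$ that is admissible in \eqref{lsp5.5} for all $u_0^n$ at once. I would fix $j_0$ for $u_0$ satisfying the strict tail bound $\sum_{|j|\geq j_0}\|\Delta_j u_0\|_{L^p}2^{\frac{d}{p}j}<\frac{a}{8}$. Then the triangle inequality yields
\[\sum_{|j|\geq j_0}\|\Delta_j u_0^n\|_{L^p}2^{\frac{d}{p}j}\leq \frac{a}{8}+\|u_0^n-u_0\|_{{B}^{\frac{d}{p}}_{p,1}}<\frac{a}{4}\]
for $n$ large, so the same $j_0$ is admissible for $u_0^n$. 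Defining $T_1^n,T_2^n$ by the formulas preceding \eqref{lsp6} with this common $j_0$ and $\|u_0^n\|_{{B}^{\frac{d}{p}-1}_{p,1}}$ in place of $\|u_0\|_{{B}^{\frac{d}{p}-1}_{p,1}}$, and setting $T^n:=\min\{T_0^n,T_1^n,T_2^n\}$, I obtain $T_i^n\to T_i$ for $i=1,2$ by continuity of the formulas in $\|u_0^n\|_{{B}^{\frac{d}{p}-1}_{p,1}}\to\|u_0\|_{{B}^{\frac{d}{p}-1}_{p,1}}$, whence $T^n\to T$.

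The main obstacle I anticipate is the boundary case $\|u_0\|_{{B}^{\frac{d}{p}-1}_{p,1}}=a$, at which the two branches of \eqref{lsp8} prescribe different formulas for $T$; here I would exploit the flexibility in the choice of the constant $a$ (condition \eqref{xxiugailsp2} imposes only $a\approx\frac{1}{24C}$) to perturb $a$ slightly so that $\|u_0\|_{{B}^{\frac{d}{p}-1}_{p,1}}$ stays strictly on one side, reducing to one of the two cases already treated. Aside from this discontinuity, every step amounts to continuous dependence of elementary expressions on $E_0$ and $\|u_0\|_{{B}^{\frac{d}{p}-1}_{p,1}}$, combined with the uniform tail-control argument above.
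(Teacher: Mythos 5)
Your argument is correct, and in fact supplies what the paper leaves out: the paper's ``proof'' of Lemma \ref{gj} is a one-line reference to the analogous Lemma 4.1 of \cite{weikui}, so the intended route is exactly the one you follow --- track the explicit formulas \eqref{lsp2}--\eqref{lsp8} for $T_0,T_1,T_2$, use $E_0^n\to E_0$ for the $T_0$-branch, and use the tail condition \eqref{lsp5.5} with a margin ($<a/8$ instead of $<a/4$) plus $\|u_0^n-u_0\|_{B^{d/p+1}_{p,1}}\to 0$ to extract a single $j_0$ valid for all large $n$, after which $T_1^n,T_2^n$ converge by continuity in $\|u_0^n\|_{B^{d/p-1}_{p,1}}$. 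You also correctly flag the only genuinely delicate point, the boundary case $\|u_0\|_{B^{d/p-1}_{p,1}}=a$, which the paper ignores. One caveat on your fix there: perturbing $a$ changes which of the two formulas in \eqref{lsp8} defines $T$ itself, so strictly speaking you are converging to a \emph{different} admissible lifespan of $(u_0,b_0)$ rather than the originally given one (harmless here only because the paper's lifespan is already non-canonical, e.g.\ $j_0$ is not unique). A variant that avoids redefining $T$: keep $T=T_0$ for $u_0$, observe that $\|e^{t\Delta}u_0^n\|_{A_{T_0}}\le\|u_0^n\|_{B^{d/p-1}_{p,1}}\le a+\epsilon_n$ with $\epsilon_n\to0$, and note that the uniform estimates of Step 2 survive with $a$ replaced by $a+\epsilon_n\le 2a$, since \eqref{xxiugailsp2} only requires $a$ small; then $T^n:=T_0^n\to T_0=T$ directly.
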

\begin{proof}
The proof is similar to Lemma 4.1 in \cite{weikui}, and we omit it here.
\end{proof}

\begin{rema}
From Lemma \ref{gj}, letting $T$ be the lifespan time of $(u^{\infty},b^{\infty})$, then we can define a $T^n$ corresponding to $(u^n,b^n)$ such that $T^n\rightarrow T,n\rightarrow\infty$. That is, for any  fixed small $\epsilon >0$, there exists an integer $N$, when $n\geq N$, we have
$$|T^n-T|< \epsilon.$$
Thus, we can choose $T-\epsilon$ as the common lifespan both for $(u^{\infty},b^{\infty})$ and $(u^{n},b^{n})$, which is independent of $n$.
\end{rema}
Now we begin to prove the continuous dependence.
\begin{theo}\label{continuous dependence}
Let $1\leq p\leq \infty$. Assume that $(u^n,b^n)_{n\in\overline{\mathbb{N}}}$ be the solution to \eqref{sp00} with the initial data $(u^n_0,b^n_0)_{n\in\overline{\mathbb{N}}}$. If $(u^n_0,b^n_0)_{n\in\mathbb{N}}$ tends to $(u^{\infty}_0,b^{\infty}_0)$ in ${B}^{\frac{d}{p}+1}_{p,1}\times{B}^{\frac{d}{p}}_{p,1}$, then there exists a positive ${T}$ independent of $n$ such that $(u^n,b^n)_{n\in{\mathbb{N}}}$ tends to $(u^{\infty},b^{\infty})$ in $C_{T}({B}^{\frac{d}{p}+1}_{p,1})\times \Big(C_{T}({B}^{\frac{d}{p}}_{p,1})\cap L^{1}_{T}({B}^{\frac{d}{p}+2}_{p,1})\Big)$.
\end{theo}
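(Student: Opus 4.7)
The strategy is a Bona--Smith type frequency decomposition: combine the weak-norm stability from Proposition \ref{uniqueness} (estimate \eqref{uniquegj2}) with a truncation of the initial data at frequency $N$, and upgrade convergence to the critical space $E^p_T$ via a triangle argument. The central difficulty is that the $u$-equation is of transport (Euler) type, so a naive $B^{d/p+1}_{p,1}$ energy estimate for a difference $\delta u$ involves the term $\delta u \cdot \nabla u^{n,N}$, whose control demands $\|u^{n,N}\|_{B^{d/p+2}_{p,1}}$ --- a norm that does not live in $E^p_T$ and that blows up as $N\to\infty$ for fixed initial data.

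First I would fix a common lifespan. By Lemma \ref{gj} and the remark following it, there is a $T>0$ independent of $n$ (for $n$ large) on which every $(u^n,b^n)$ exists and satisfies $\sup_n \|(u^n,b^n)\|_{E^p_T}\leq M$ from Step 2 of the local existence proof. Applying \eqref{uniquegj2} to $(u^n,b^n)$ against $(u^\infty,b^\infty)$ immediately gives
\[
\|u^n - u^\infty\|_{L^\infty_T(B^{d/p}_{p,\infty})} + \|b^n - b^\infty\|_{L^\infty_T(B^{d/p-1}_{p,\infty}) \cap L^1_T(B^{d/p+1}_{p,\infty})} \longrightarrow 0.
\]
Since $(u^n_0,b^n_0) \to (u^\infty_0,b^\infty_0)$ in the critical norm, the high-frequency tails $\|(I-S_N)(u^n_0,b^n_0)\|_{B^{d/p+1}_{p,1}\times B^{d/p}_{p,1}}$ are uniformly small in $n$ for large $N$. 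Introduce the truncated data $(S_Nu^n_0, S_Nb^n_0)$ and let $(u^{n,N},b^{n,N})\in E^p_T$ be the corresponding solutions; Step 1 of the local existence proof then yields the extra regularity bound $\|u^{n,N}\|_{L^\infty_T(B^{d/p+2}_{p,1})} + \|b^{n,N}\|_{L^1_T(B^{d/p+3}_{p,1})} \leq C(N,M)$.

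Next I would decompose
\[
(u^n,b^n) - (u^\infty,b^\infty) = \big[(u^n,b^n) - (u^{n,N},b^{n,N})\big] + \big[(u^{n,N},b^{n,N}) - (u^{\infty,N},b^{\infty,N})\big] + \big[(u^{\infty,N},b^{\infty,N}) - (u^\infty,b^\infty)\big].
\]
For the middle term with $N$ fixed, apply \eqref{uniquegj1}: the offending integral $\int_0^T \|u^{\infty,N}\|_{B^{d/p+2}_{p,1}}\|\delta u\|_{B^{d/p+1}_{p,1}}\,d\tau$ is now integrable thanks to the extra smoothness of $u^{\infty,N}$, and Gronwall combined with $\|S_N(u^n_0-u^\infty_0)\|_{B^{d/p+k}_{p,1}} \to 0$ for every $k\geq 0$ forces the middle term to vanish as $n\to\infty$, for each fixed $N$. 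The outer terms reduce to proving the Bona--Smith stability estimate
\[
\|(u^n,b^n) - (u^{n,N},b^{n,N})\|_{E^p_T} \leq C(M,T)\,\|(I-S_N)(u^n_0,b^n_0)\|_{B^{d/p+1}_{p,1}\times B^{d/p}_{p,1}},
\]
with $C(M,T)$ independent of $N$ and $n$. This is the main obstacle. To bypass the derivative loss on $u^{n,N}$, I would split $\delta u := u^n-u^{n,N}$ into low- and high-frequency blocks: low frequencies are controlled through the weak-norm convergence and a logarithmic interpolation as in \eqref{interpolation}, while high frequencies are propagated by the tame commutator form of Lemma \ref{priori estimate}, so that only $\|\nabla u^n\|_{L^\infty} + \|\nabla u^{n,N}\|_{L^\infty} \lesssim M$ enters the exponential. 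The parabolic block $\delta b$ is treated in parallel using Lemma \ref{heat}, and the resulting coupled system of inequalities is closed by Gronwall's inequality. Choosing $N$ large to kill the outer terms uniformly in $n$, and then $n$ large to kill the middle term, completes the proof; this realises the ``frequency decomposition and decomposition of equations'' strategy announced in the introduction.
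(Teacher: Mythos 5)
Your skeleton matches the paper's: a triangle-inequality (Bona--Smith type) decomposition around solutions with frequency-truncated data, the weak-norm stability \eqref{uniquegj2} to exploit the $2^{-N}$ gain on the high-frequency tails, and the extra regularity $\|u^{n,N}\|_{L^\infty_T(B^{d/p+2}_{p,1})}\lesssim 2^N$ to make \eqref{uniquegj1} close for the middle term. Parts 1 and 2 of the paper's proof are exactly this. The gap is in your treatment of the outer terms, which you correctly identify as the main obstacle but do not actually resolve. The dangerous term in the $B^{d/p+1}_{p,1}$ estimate for $\delta u=u^n-u^{n,N}$ is $\delta u\cdot\nabla u^{n,N}$ (it is not a commutator, so the ``tame'' form of Lemma \ref{priori estimate} does not see it); the product law forces you to pay $\|\delta u\|_{L^\infty}\|u^{n,N}\|_{B^{d/p+2}_{p,1}}$. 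Since $B^{d/p}_{p,\infty}\not\hookrightarrow L^\infty$, passing from the weak norm supplied by \eqref{uniquegj2} to $L^\infty$ via \eqref{interpolation} costs a logarithm of the norm ratio, i.e.\ a factor of order $N$, so this product is only bounded by $N\,\epsilon_N\,M$ with $\epsilon_N=\|(I-S_N)u^n_0\|_{B^{d/p+1}_{p,1}}$. In the critical space $\epsilon_N\to 0$ with no rate, so $N\epsilon_N$ need not vanish, and your claimed stability estimate $\|(u^n,b^n)-(u^{n,N},b^{n,N})\|_{E^p_T}\leq C(M,T)\|(I-S_N)(u^n_0,b^n_0)\|$ with $C$ independent of $N$ does not follow from the ingredients you list. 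The alternative you mention (propagating uniform smallness of high-frequency tails of the solutions themselves) would also need a genuine argument, since transport mixes frequencies.

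The paper bypasses this quantitative estimate entirely. It passes to the vorticity $\Omega^n_j={\rm curl}\,u^n_j$, which solves a transport equation with source, and splits $\Omega^n_j=w^n_j+z^n_j$: here $w^n_j$ carries the \emph{full} initial vorticity ${\rm curl}\,u^n_0$ and the limiting source $F^\infty$, while $z^n_j$ carries the small data $(S_j-Id){\rm curl}\,u^n_0$ and the source difference $F^j-F^\infty$. The convergence $w^n_j\to w^n_\infty$ in $C_T(B^{d/p}_{p,1})$ is obtained \emph{qualitatively} from Lemma \ref{Convergence} (stability of the transport equation under convergence of the velocity in a weaker norm, with uniform bounds in the stronger one), which is exactly what replaces the unavailable rate; $z^n_j\to 0$ then follows from a Gronwall loop coupled with the $\delta_j b$ estimate. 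If you want to complete your write-up along the paper's lines, the missing ingredient is precisely this decomposition of the vorticity equation together with Lemma \ref{Convergence}; as written, your proof of the outer-term estimate does not close.
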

\begin{proof}
Our aim is to estimate $\|u^n-u^{\infty}\|_{L^{\infty}_{T}({B}^{\frac{d}{p}+1}_{p,1})}$ and $\|b^n-b^{\infty}\|_{L^{\infty}_{T}({B}^{\frac{d}{p}}_{p,1})\cap L^{1}_{T}({B}^{\frac{d}{p}+2}_{p,1})}$ when $n\rightarrow\infty$. Note that
\begin{equation}\label{frequence}
  \left\{\begin{array}{l}
    \|u^n-u^{\infty}\|_{{L^{\infty}_{T}({B}^{\frac{d}{p}+1}_{p,1})}}\\
    \leq\|u^n-u^n_j\|_{{L^{\infty}_{{T}}({B}^{\frac{d}{p}+1}_{p,1})}}+\|u^n_j-u^{\infty}_j\|_{{L^{\infty}_{{T}}({B}^{\frac{d}{p}+1}_{p,1})}}
    +\|u^{\infty}_j-u^{\infty}\|_{{L^{\infty}_{{T}}({B}^{\frac{d}{p}+1}_{p,1})}},\\
    \|b^n-b^{\infty}\|_{L^{\infty}_{T}({B}^{\frac{d}{p}}_{p,1})\cap L^{1}_{{T}}({B}^{\frac{d}{p}+2}_{p,1})}\leq\|b^n-b^n_j\|_{L^{\infty}_{T}({B}^{\frac{d}{p}}_{p,1})\cap L^{1}_{{T}}({B}^{\frac{d}{p}+2}_{p,1})}\\
    \qquad\qquad\qquad\qquad\qquad\quad+\|b^n_j-b^{\infty}_j\|_{L^{\infty}_{T}({B}^{\frac{d}{p}}_{p,1})\cap L^{1}_{{T}}({B}^{\frac{d}{p}+2}_{p,1})}
    +\|b^{\infty}_j-b^{\infty}\|_{L^{\infty}_{T}({B}^{\frac{d}{p}}_{p,1})\cap L^{1}_{{T}}({B}^{\frac{d}{p}+2}_{p,1})},
  \end{array}\right.
\end{equation}
where
\begin{align*}
&(u^n,b^n) \text{ corresponds to the initial data } (u^n_0,b^n_0), \quad n\in \overline{\mathbb{N}},\\
&(u^{n}_j,b^{n}_j) \text{ corresponds to the initial data } (\dot{S}_ju^n_0,\dot{S}_jb^n_0),  \quad n, j\in \overline{\mathbb{N}}.
\end{align*}
Using Lemma \ref{gj}, we find that $T-\epsilon$ (we still write it as $T$) is the common lifespan for $(u^{n},b^{n})$, $(u^{n}_j,b^{n}_j)$, $(u^{\infty},b^{\infty})$ and $(u^{\infty}_j,b^{\infty}_j)$ when $n,j$ are large enough. Since $(u^n_0,b^n_0)\rightarrow(u^{\infty}_0,b^{\infty}_0)$ and $({S}_ju^n_0,{S}_jb^n_0)\rightarrow(u^{n}_0,b^{n}_0)$ in ${B}^{\frac{d}{p}+1}_{p,1}\times{B}^{\frac{d}{p}}_{p,1}$, taking the similar argument as in Step 2, we see that for any large $n,j$,
\begin{align}\label{jie}
\|u^n,u^n_j\|_{L^{\infty}_{T}({B}^{\frac{d}{p}+1}_{p,1})},\quad\|b^n,b^n_j\|_{L^{\infty}_{T}({B}^{\frac{d}{p}}_{p,1})}\leq C_{E_0},\quad\|b^n,b^n_j\|_{L^{2}_{T}({B}^{\frac{d}{p}+1}_{p,1})\cap L^{1}_{T}({B}^{\frac{d}{p}+2}_{p,1})}\leq 2a,
\end{align}
where $E^n_0:=\|u^{n}_0\|_{{B}^{\frac{d}{p}-1}_{p,1}}+\|b^{n}_0\|_{{B}^{\frac{d}{p}}_{p,1}}$, $a$ is a small quantity satisfying \eqref{xxiugailsp2} and $T$ satisfies \eqref{lsp2}.
For any $t\in [0,T]$, we now estimate \eqref{frequence} in three parts.

\textbf{1. Estimate }$\|u^n_j-u^{\infty}_j\|_{{L^{\infty}_{T}({B}^{\frac{d}{p}-1}_{p,1})\cap L^{1}_{T}({B}^{\frac{d}{p}+1}_{p,1})}}$ \textbf{and} $\|b^n_j-b^{\infty}_j\|_{L^{\infty}_{T}({B}^{\frac{d}{p}}_{p,1})}$ \textbf{for fixed $j$}.\\
Note that $(u^n_j,b^n_j)_{n\in\overline{\mathbb{N}}}$ satisfy
\begin{equation}\label{spnj1}
\left\{\begin{array}{lll}
u^{n}_{jt}+u^{n}_j\nabla u^{n}_j=b^{n}_j\nabla b^{n}_j+\frac{\nabla div}{-\Delta}(b^n\nabla b^n-u^n\nabla u^n),\\
b^{n}_{jt}-\Delta b^{n}_j+u^{n}_j\nabla b^{n}_j=b^{n}_j\nabla u^{n}_j ,\\
(u^n_0,b^n_0):=(\dot{S}_ju^n_0,\dot{S}_jb^n_0).
\end{array}\right.
\end{equation}
Applying Lemmas \ref{heat}--\ref{priori estimate} to \eqref{spnj1}, we have
\begin{align}\label{spnju}
\|u^n_j\|_{{B}^{\frac{d}{p}+2}_{p,1}}
&\leq \|{S}_ju^{n}_0\|_{{B}^{\frac{d}{p}+2}_{p,1}}
+\int_{0}^{t}\|u^n_j\|_{{B}^{\frac{d}{p}+1}_{p,1}}\|u^n_j\|_{{B}^{\frac{d}{p}+2}_{p,1}}
+\|b^n_j\|_{{B}^{\frac{d}{p}}_{p,1}}\|b^n_j\|_{{B}^{\frac{d}{p}+3}_{p,1}}{\ud}\tau\notag\\
&\leq 2^jC\eta\|{S}_ju^{n}_0\|_{{B}^{\frac{d}{p}+1}_{p,1}}
+\int_{0}^{t}12E_0C\|u^n_j\|_{{B}^{\frac{d}{p}+2}_{p,1}}{\ud}\tau
+12E_0C\|b^n_j\|_{L^1_t({B}^{\frac{d}{p}+3}_{p,1})}
\end{align}
and
\begin{align}\label{spnjb}
\|b^n_j\|_{{B}^{\frac{d}{p}+1}_{p,1}\cap L^1_T({B}^{\frac{d}{p}+3}_{p,1})}
&\leq \|{S}_jb^{n}_0\|_{{B}^{\frac{d}{p}+1}_{p,1}}
+C\int_{0}^{t}\|b^n_j\|_{{B}^{\frac{d}{p}+2}_{p,1}}\|u^n_j\|_{{B}^{\frac{d}{p}}_{p,1}}
+\|b^n_j\|_{{B}^{\frac{d}{p}}_{p,1}}\|u^n_j\|_{{B}^{\frac{d}{p}+2}_{p,1}}{\ud}\tau\notag\\
&\leq 2^j\|{S}_jb^{n}_0\|_{{B}^{\frac{d}{p}}_{p,1}}
+ 12CT^{\frac{1}{2}}E_0\|b^n_j\|_{L^2_T({B}^{\frac{d}{p}+2}_{p,1})}+12E_0C\int_{0}^{t}\|u^n_j\|_{{B}^{\frac{d}{p}+2}_{p,1}}{\ud}\tau,
\end{align}
where we used the fact that $\|\dot{S}_jg\|_{{B}^{\frac{d}{p}}_{p,1}}\leq C2^{m}\|\dot{S}_jg\|_{{B}^{\frac{d}{p}-m}_{p,1}},~m>0 .$

By virtue of \eqref{spnju}, \eqref{spnjb}$\times (48E_0C+1)$ and the Gronwall inequality, we obtain
\begin{align}\label{cd1}
\|b^n_j\|_{L^{\infty}_T({B}^{\frac{d}{p}+1}_{p,1})\cap L^1_T({B}^{\frac{d}{p}+3}_{p,1})}
+\|u^n_j\|_{{B}^{\frac{d}{p}+2}_{p,1}}
\leq 2^jC(\|b^{n}_0\|_{{B}^{\frac{d}{p}}_{p,1}}+\|u^{n}_0\|_{{B}^{\frac{d}{p}+1}_{p,1}})
\leq C_{E_0}2^j.
\end{align}

For fixed $j$, letting $\delta^n u=u^n_j-u^{\infty}_j$ and $\delta^n b=b^n_j-b^{\infty}_j$, then $(\delta^n u,\delta^n b)$ satifies
\begin{equation}
\left\{\begin{array}{lll}
(\delta^nu)_t+u^n_j\nabla(\delta^n u)+(\delta^n)u\nabla u^{\infty}_j+\nabla (P^n_j-P^{\infty}_j)=b^n_j\nabla (\delta^nb)+(\delta^nb)\nabla b^{\infty}_j,\\
(\delta^n b)_t-\Delta(\delta^n b)+u^n_j\nabla(\delta^n b)+(\delta^nu)\nabla b^{\infty}_j=b^n_j\nabla(\delta^n u)+(\delta^n b)\nabla u^{\infty}_j ,\\
(\delta^nu,\delta^nb)|_{t=0}=({S}_ju^n_0,{S}_jb^n_0).
\end{array}\right.
\end{equation}
According to \eqref{uniquegj1}, we see
\begin{align}\label{cd2}
&\|\delta^n b\|_{L^{\infty}_T ({B}^{\frac{d}{p}}_{p,1})\cap L^{1}_T ({B}^{\frac{d}{p}+2}_{p,1})}+\|\delta u\|_{L^{\infty}_T ({B}^{\frac{d}{p}+1}_{p,1})}\notag\\
\leq&
e^{A(T)}(\|\delta^n b_0\|_{{B}^{\frac{d}{p}}_{p,1}}+\|\delta^n u_0\|_{{B}^{\frac{d}{p}+1}_{p,1}}+\int_{0}^{T}\|u^{\infty}_j\|_{{B}^{\frac{d}{p}+2}_{p,1}}\|\delta^nu\|_{{B}^{\frac{d}{p}+1}_{p,1}}{\ud}\tau),
\end{align}
where $A(T):=C_{E_0}\int_{0}^{T}1+\|u^{\infty}_j\|_{{B}^{\frac{d}{p}+1}_{p,1}}+\|u^{n}_j\|_{{B}^{\frac{d}{p}+1}_{p,1}}+\|b^{n}_j\|_{{B}^{\frac{d}{p}+2}_{p,1}}+\|b^{\infty}_j\|_{{B}^{\frac{d}{p}+2}_{p,1}}{\ud}\tau\leq C_{E_0,T}$. The Gronwall's inequality and \eqref{cd1} together yield
\begin{align}
\|\delta^n b\|_{L^{\infty}_T ({B}^{\frac{d}{p}}_{p,1})\cap L^{1}_T ({B}^{\frac{d}{p}+2}_{p,1})}+\|\delta u\|_{L^{\infty}_T ({B}^{\frac{d}{p}+1}_{p,1})}\leq
 e^{C_{E_0,T}2^j}(\|\delta^n b_0\|_{{B}^{\frac{d}{p}}_{p,1}}+\|\delta^n u_0\|_{{B}^{\frac{d}{p}+1}_{p,1}}),
\end{align}
which implies that for any fixed $j$,
\begin{align}\label{sp8}
\|u^n_j-u^{\infty}_j\|_{L^{\infty}_{T}({B}^{\frac{d}{p}+1}_{p,1})}+\|b^n_j-b^{\infty}_j\|_{L^{\infty}_{T}({B}^{\frac{d}{p}}_{p,1})\cap L^1_{T}({B}^{\frac{d}{p}+2}_{p,1})} \rightarrow 0,\quad n\rightarrow\infty .
\end{align}

\textbf{2. Estimate }$\|b^n-b^n_j\|_{L^{\infty}_{T}({B}^{\frac{d}{p} }_{p,1})\cap L^1_{T}({B}^{\frac{d}{p}+2}_{p,1})}$ \textbf{for any $n\in\overline{\mathbb{N}}$} .

Letting $\delta_j u=u^n-u^{n}_j$ and $\delta_j b=b^n-b^{n}_j$, then we have
\begin{equation}\label{spp1}
\left\{\begin{array}{lll}
(\delta_ju)_t+u^n\nabla(\delta_j u)+(\delta_ju)\nabla u^n_j+\nabla (P^n-P^n_j)=b^n\nabla (\delta_jb)+(\delta_jb)\nabla b^n_j,\\
(\delta_j b)_t-\Delta(\delta_j b)+u^n\nabla(\delta_j b)+(\delta_ju)\nabla b^n_j=b^n\nabla(\delta_j u)+(\delta_j b)\nabla u^n_j ,\\
(\delta_ju_0,\delta_jb_0)|_{t=0}=\big((Id-S_j)u^n_0,(Id-S_j)b^n_0\big).
\end{array}\right.
\end{equation}
Thanks to \eqref{uniquegj2} in Proposition \ref{uniqueness}, we get
\begin{align}\label{uniquegj22}
\|\delta_j b\|_{L^{\infty}_T ({B}^{\frac{d}{p}-1}_{p,\infty})\cap L^{1}_T ({B}^{\frac{d}{p}+1}_{p,\infty})}+\|\delta_j u\|_{L^{\infty}_T ({B}^{\frac{d}{p}}_{p,\infty})}&\leq
e^{e^{A(T)}}(\|(Id-S_j)b_0\|_{{B}^{\frac{d}{p}-1}_{p,\infty}}+\|(Id-S_j)u_0\|_{{B}^{\frac{d}{p}}_{p,\infty}})\notag\\
&\leq C_{E_0,T}2^{-j}(\|(Id-S_j)b_0\|_{{B}^{\frac{d}{p} }_{p,\infty}}+\|(Id-S_j)u_0\|_{{B}^{\frac{d}{p}+1}_{p,\infty}})\notag\\
&\rightarrow 0,\quad j\rightarrow \infty.
\end{align}
Since $\|\delta_j u\|_{L^{\infty}_T ({B}^{\frac{d}{p}+1}_{p,1})}\leq C_{E_0} $, we deduce by the interpolation inequality that
\begin{align}\label{step2}
\|\delta_j u\|_{L^{\infty}_T ({B}^{\frac{d}{p}+1-\epsilon}_{p,1})}\rightarrow 0,\quad j\rightarrow \infty,\quad \forall \epsilon>0.
\end{align}

Next we estimate $\|\delta_jb\|_{L^{\infty}_{T}({B}^{\frac{d}{p}}_{p,1})\cap L^1_{T}({B}^{\frac{d}{p}+2}_{p,1})}$. Similarly, taking advantage of \eqref{uniquegj1} in Proposition \ref{uniqueness}, we find
\begin{align}\label{spp33}
\|\delta_j b\|_{L^{\infty}_{T}({B}^{\frac{d}{p}}_{p,1})\cap L^1_T({B}^{\frac{d}{p}+2}_{p,1})}
\leq C_{E_0}\Big(\|(Id-S_j)b_0\|_{{B}^{\frac{d}{p}}_{p,1}}+\int_{0}^{t}\|\delta_j u\|_{{B}^{\frac{d}{p}+1}_{p,1}}{\ud}\tau\Big),
\end{align}
which implies we need to estimate  $\|\delta_ju\|_{{B}^{\frac{d}{p}+1}_{p,1}}$ to obtain the continuous dependence of $(\delta_ju,\delta_jb)$.

\textbf{3. Estimate } $\|u^n-u^n_j\|_{L^{\infty}_{T}({B}^{\frac{d}{p}+1}_{p,1})}$ \textbf{for any $n\in\overline{\mathbb{N}}$} .

Define that $\Omega^{n}_{j}:={\rm curl} u^{n}_j$, $u^{n}_{\infty}:=u^{n}$, $b^{n}_{\infty}:=b^{n}$. Then $\Omega^n_j$ satisfies:
\begin{equation}\label{equ1-1-5}
  \left\{\begin{array}{l}
   \frac{{\ud}}{{\ud}t}\Omega^n_j+u^n_j\nabla \Omega^n_j=\Omega^n_j\nabla u^n_j+b^n_j\nabla {\rm curl}b^n_j-{\rm curl}b^n_j\nabla b^n_j,  \\
   \Omega^n_j(0,x)=\dot{S}_j({\rm curl}u^n_0).
  \end{array}\right.
\end{equation}
Let $\Omega^n_j:=w^n_j+z^n_j$ such that
\begin{equation}\label{spp5}
\left\{\begin{array}{lll}
\frac{{\ud}}{{\ud}t}w^n_j+u^n_j\nabla w^n_j=F^{\infty},\\
w^n_j|_{t=0}={\rm curl}u^n_0
\end{array}\right.
\end{equation}
and
\begin{equation}\label{spp6}
\left\{\begin{array}{lll}
\frac{{\ud}}{{\ud}t}z^n_j+u^n_j\nabla z^n_j=F^{j}-F^{\infty},\\
z^n_j|_{t=0}=(\dot{S}_j-Id){\rm curl}u^n_0,
\end{array}\right.
\end{equation}
where $ F^{j}:=\Omega^n_j\nabla u^n_j+b^n_j\nabla {\rm curl}b^n_j-{\rm curl}b^n_j\nabla b^n_j$
and $F^{\infty}:=\Omega^n_{\infty}\nabla u^n_{\infty}+b^n_{\infty}\nabla {\rm curl}b^n_{\infty}-{\rm curl}b^n_{\infty}\nabla b^n_{\infty}$.

Since $F^{\infty},F^{j}$ are bounded in $L^1_{T}({B}^{\frac{d}{p}}_{p,1})$, by use of Remark \ref{priori estimate1} and Theorem 3.19 in \cite{book}, we deduce that \eqref{spp5} and \eqref{spp6} have a unique solution $w^n_j,z^n_j\in C_{T}({B}^{\frac{d}{p}}_{p,1})$.

Our main idea is to verify that
$(w^n_j,z^n_j)\rightarrow (w^n_{\infty},0)\text{ in }{B}^{\frac{d}{p}}_{p,1}$ for any $n\in\overline{\mathbb{N}}$, which implies that
$ u^n_j\rightarrow u^n_{\infty}\text{ in }{B}^{\frac{d}{p}+1}_{p,1}.$
For this purpose, we divide the verification into the following three parts.

Firstly, we estimate $\|w^n_j-w^n_{\infty}\|_{L^{\infty}_{T}({B}^{\frac{d}{p}}_{p,1})}$. Taking advantage of Lemma \ref{Convergence} and \eqref{step2}, one can easily obtain
\begin{align}\label{spp14}
\|w^n_j-w^n_{\infty}\|_{L^{\infty}_{T}({B}^{\frac{d}{p}}_{p,1})}\rightarrow 0,\quad j\rightarrow\infty ,\quad\forall n\in \overline{\mathbb{N}}.
\end{align}

Next, we estimate $\|z^n_j\|_{L^{\infty}_{T}({B}^{\frac{d}{p}}_{p,1})}$. Since ${\rm div}({\rm curl}u)={\rm div}u=0 $, we see
\begin{align}\label{spp16}
&\int_{0}^{T}\|F^{j}-F^{\infty}\|_{{B}^{\frac{d}{p}}_{p,1}}{\ud}\tau \notag\\
&\leq \int_{0}^{T}\|(\Omega^n_j-\Omega^n_{\infty})\nabla u^n_j\|_{{B}^{\frac{d}{p}}_{p,1}}+\|\Omega^n_{\infty}\nabla (u^n_j-u^n_{\infty})\|_{{B}^{\frac{d}{p}}_{p,1}}
+\|(b^n_j-b^n_{\infty})\nabla {\rm curl}b^n_j\|_{{B}^{\frac{d}{p}}_{p,1}}\notag\\
&\quad +\|b^n_{\infty}\nabla ({\rm curl}b^n_j-{\rm curl}b^n_{\infty})\|_{{B}^{\frac{d}{p}}_{p,1}}
+\|({\rm curl}b^n_j-{\rm curl}b^n_{\infty})\nabla b^n_j\|_{{B}^{\frac{d}{p}}_{p,1}}+\|{\rm curl}b^n_{\infty}\nabla (b^n_j-b^n_{\infty})\|_{{B}^{\frac{d}{p}}_{p,1}}{\ud}\tau\notag\\
&\leq \int_{0}^{T}6E_0C(\|\Omega^n_j-\Omega^n_{\infty}\|_{{B}^{\frac{d}{p}}_{p,1}}+ \|u^n_j-u^n_{\infty}\|_{{B}^{\frac{d}{p}+1}_{p,1}})
+\|b^n_j\|_{{B}^{\frac{d}{p}+2}_{p,1}}\|b^n_j-b^n_{\infty}\|_{{B}^{\frac{d}{p}}_{p,1}}{\ud}\tau\notag\\
&\quad +6E_0C\|b^n_j-b^n_{\infty}\|_{L^1_T({B}^{\frac{d}{p}+2}_{p,1})}
+4aC\|b^n_j-b^n_{\infty}\|_{L^2_T({B}^{\frac{d}{p}+1}_{p,1})}
\notag\\
&\leq \int_{0}^{T}6E_0C(\|z^n_j\|_{{B}^{\frac{d}{p}}_{p,1}}+\|w^n_j-w^n_{\infty}\|_{{B}^{\frac{d}{p}}_{p,1}}+\|u^n_j-u^n_{\infty}\|_{{B}^{\frac{d}{p}}_{p,1}})
+\|b^n_j\|_{{B}^{\frac{d}{p}+2}_{p,1}}\|b^n_j-b^n_{\infty}\|_{{B}^{\frac{d}{p}}_{p,1}}{\ud}\tau\notag\\
&\quad +6E_0C\|b^n_j-b^n_{\infty}\|_{L^1_T({B}^{\frac{d}{p}+2}_{p,1})}
+4aC\|b^n_j-b^n_{\infty}\|_{L^2_T({B}^{\frac{d}{p}+1}_{p,1})}
\end{align}
where the last inequality is based on $\|\Omega^n_j-\Omega^n_{\infty}\|_{{B}^{\frac{d}{p}}_{p,1}}\leq \|w^n_j-w^n_{\infty}\|_{{B}^{\frac{d}{p}}_{p,1}}+\|z^n_j\|_{{B}^{\frac{d}{p}}_{p,1}}$. \eqref{spp6} and \eqref{spp16} ensure that
\begin{align}\label{hrecallspp33}
\|z^n_j\|_{{B}^{\frac{d}{p}}_{p,1}}
\leq&\|(Id-\dot{S}_j){\rm curl} u^n_0\|_{{B}^{\frac{d}{p}}_{p,1}}+6E_0C\|b^n_j-b^n_{\infty}\|_{L^1_T{B}^{\frac{d}{p}+2}_{p,1}}
+4aC\|b^n_j-b^n_{\infty}\|_{L^2_T{B}^{\frac{d}{p}+1}_{p,1}}\notag\\
&+\int_{0}^{T}6E_0C(\|z^n_j\|_{{B}^{\frac{d}{p}}_{p,1}}+\|w^n_j-w^n_{\infty}\|_{{B}^{\frac{d}{p}}_{p,1}}+\|u^n_j-u^n_{\infty}\|_{{B}^{\frac{d}{p}}_{p,1}})
+\|b^n_j\|_{{B}^{\frac{d}{p}+2}_{p,1}}\|b^n_j-b^n_{\infty}\|_{{B}^{\frac{d}{p}}_{p,1}}{\ud}\tau.
\end{align}
By \eqref{spp33}, we find
\begin{align}\label{recallspp33}
&\|b^n_j-b^n_{\infty}\|_{L^{\infty}({B}^{\frac{d}{p}}_{p,1})\cap L^2_T{B}^{\frac{d}{p}+1}_{p,1}\cap L^1_T{B}^{\frac{d}{p}+2}_{p,1}}\notag\\
\leq& C_{E_0}(\|(Id-S_j)b_0\|_{{B}^{\frac{d}{p}}_{p,\infty}}+\int_{0}^{t}\|u^n_j-u^n_{\infty}\|_{{B}^{\frac{d}{p}+1}_{p,1}}ds)\notag\\
\leq& C'_{E_0}(\|(Id-S_j)b_0\|_{{B}^{\frac{d}{p}}_{p,\infty}}+\int_{0}^{t}\|u^n_j-u^n_{\infty}\|_{{B}^{\frac{d}{p}}_{p,1}}+\|w^n_j-w^n_{\infty}\|_{{B}^{\frac{d}{p}}_{p,1}}+\|z^n_j\|_{{B}^{\frac{d}{p}}_{p,1}}{\ud}\tau).
\end{align}
Combining \eqref{hrecallspp33} with \eqref{recallspp33}$\times (48E_0C+8aC+1)$, and then applying the Gronwall inequality, we have
\begin{align}\label{spp18}
&\|z^n_j\|_{L^{\infty}_{t}({B}^{\frac{d}{p}}_{p,1})}+\|b^n_j-b^n_{\infty}\|_{L^{\infty}({B}^{\frac{d}{p}}_{p,1})\cap L^2_T{B}^{\frac{d}{p}+1}_{p,1}\cap L^1_T{B}^{\frac{d}{p}+2}_{p,1}}\notag\\
\leq &  C_{E_0,T}(\|(Id-S_j)u^n_0\|_{{B}^{\frac{d}{p}+1}_{p,1}}+\|(Id-S_j)b^n_0\|_{{B}^{\frac{d}{p}}_{p,1}}+
\|w^n_j-w^n_{\infty}\|_{L^{\infty}({B}^{\frac{d}{p}}_{p,1})}+\|u^n_j-u^n_{\infty}\|_{L^{\infty}({B}^{\frac{d}{p}}_{p,1})}) \notag\\
\rightarrow & 0,\quad j\rightarrow\infty,\quad\forall n\in \overline{\mathbb{N}},
\end{align}
where the last inequality holds by \eqref{spp14} and \eqref{step2}.

Finally, from \eqref{spp14} and \eqref{spp18}, we can obtain
\begin{align}\label{spp19}
\|b^n_j-b^n_{\infty}\|_{L^{\infty}_{T}({B}^{\frac{d}{p}}_{p,1})\cap L^1_T({B}^{\frac{d}{p}+2}_{p,1})}\rightarrow 0\quad and\quad
\|u^n_j-u^n_{\infty}\|_{L^{\infty}_{T}({B}^{\frac{d}{p}+1}_{p,1})}\rightarrow 0,\quad j\rightarrow\infty,\quad \forall n\in \overline{\mathbb{N}}.
\end{align}

Thus, by \textbf{1-3}
we can prove the continuous dependence. In fact, combining \eqref{sp8} and \eqref{spp19}, we see
\begin{align*}
\|u^n-u^{\infty}\|_{L^{\infty}_{T}({B}^{\frac{d}{p}-1}_{p,1})\cap L^1_{T}({B}^{\frac{d}{p}+1}_{p,1})}+\|b^n-b^{\infty}\|_{L^{\infty}_{T}({B}^{\frac{d}{p}}_{p,1})}
\rightarrow 0,\quad n\rightarrow +\infty,
\end{align*}
which implies the continuous dependence.
\end{proof}

Therefore, Combining the proof of local existence, uniqueness and continuous dependence in Section 3 and Section 4, we obtain Theorem \ref{theorem}.
\section{Global existence}
\textbf{The proof of Theorem \ref{theoremglobal1}:}
\begin{proof}
We use the bootstrap argument to prove this Theorem. Without loss of generality, we only consider the critical case: $s=1+\frac{2}{p}$. Let $T^*$ be the maximal existence time of the solution. Assume that for any $t\leq T< T^*$,
\begin{align}\label{global00}
\|b\|_{L^{\infty}_T(B^{0}_{\infty,1})\cap L^{1}_T(B^{2}_{\infty,1})}\leq 4.
\end{align}
Let $\epsilon$ be a sufficient small positive constant such that
\begin{align}\label{globaljie1}
\|u_0\|_{B^1_{\infty,1}(\mathbb{S}^2)}+\|b_0\|_{B^0_{\infty,1}(\mathbb{S}^2)}\leq\epsilon\leq \frac{c}{16(1+C^{20})},
\end{align}
where $0<6c<1$ and $20<C$ are some fixed constants and will be determined later. The proof can be divided into 4 parts:

(1) First, we give the estimation of $\|b(t)\|_{L^2}$.\\
It's trivial to verify that
$$\frac{1}{2}(\|u\|^2_{L^2}+\|b\|^2_{L^2})+\int_{0}^{T}\|b\|^2_{\dot{H}^1}{\ud}\tau =\frac{1}{2}(\|u_0\|^2_{L^2}+\|b_0\|^2_{L^2}).$$

Let $w={\rm curl}u$ with $d=2$, we have
\begin{align}\label{global6}
w_t+u\nabla w=b\nabla {\rm curl}b.
\end{align}
By \eqref{global00}, we deduce that
\begin{align}\label{globalbuchong1}
\|w\|_{L^{\infty}_T(L^{\infty})}\leq \|u_0\|_{B^{1}_{\infty,1}}+C\|b\|_{L^{\infty}_T(B^{0}_{\infty,1})}\|b\|_{ L^{1}_T(B^{2}_{\infty,1})}\leq C(\epsilon+16)\leq C^2.
\end{align}

Since $\int_{\mathbb{S}}b_0{\ud}x=0$, one can easily deduce that $\int_{\mathbb{S}}b{\ud}x=0$, which means that $\Delta_{-1}b=0$ and $\|b\|_{L^2(\mathbb{S})}\leq C\|b\|_{\dot{H}^1(\mathbb{S})}$. Taking $L^2$ inner product with $b$ to the second equation of \eqref{sp00}, we have
\begin{align}\label{global1}
\frac{{\ud}}{{\ud}t}\|b\|^2_{L^2}+\|b\|^2_{\dot{H}^1}\leq & C\|u\|_{L^{\infty}}\|b\|_{L^2}\|b\|_{\dot{H}^1}\notag\\
\leq & C(\|w\|^{\frac{1}{2}}_{L^{\infty}}\|u_0\|^{\frac{1}{2}}_{L^2})\|b\|_{L^2}\|b\|_{\dot{H}^1}\notag\\
\leq & C^2\epsilon^{\frac{1}{2}}\|b\|_{L^2}\|b\|_{\dot{H}^1}\notag\\
\leq & \frac{1}{2}\|b\|^2_{\dot{H}^1}
\end{align}
Hence, there exists a positive constant $0<6c<1$ such that
\begin{align}\label{global2}
\|b\|_{L^2}\leq C\|b_0\|_{L^2}e^{-6ct}\leq C\epsilon e^{-6ct}.
\end{align}

(2) Then, we give the estimation of $\|b(t)\|_{L^{\infty}}.$

Using the fact that $\|\nabla^ke^{t\Delta}f\|_{L^{q}}\leq Ct^{-\frac{k}{2}-\frac{1}{p}+\frac{1}{q}}\|f\|_{L^{p}}~(1\leq p\leq q\leq \infty)$ for $\int_{\mathbb{S}^2}fdx=0$ in \cite{weishen}, we have
\begin{align}\label{xin3}
\|b\|_{L^{\infty}}&\leq C\|b_0\|_{L^{\infty}}e^{-t}+\int_{0}^{t}(t-s)^{-\frac{2}{3}}e^{-(t-s)}\|b\nabla u\|_{L^{\frac{3}{2}}}ds+\int_{0}^{t}e^{-(t-s)}\|u\nabla b\|_{L^{\infty}}ds\notag\\
&\leq C\epsilon e^{- 6ct}+\int_{0}^{t}(t-s)^{-\frac{2}{3}}e^{-(t-s)}\|b\|_{L^{2}}\|w\|_{L^{6}}ds+\int_{0}^{t}e^{-(t-s)}\|b\|_{L^2}^{\frac{1}{3}}\|b\|_{B^2_{\infty,1}}^{\frac{2}{3}}\|u\|_{L^{\infty}}ds\notag\\
&\leq C\epsilon e^{- 6ct}+C^3\epsilon\int_{0}^{t}(t-s)^{-\frac{2}{3}}e^{-2c(t-s)} e^{-2cs}ds+C^3\epsilon^{\frac{1}{3}}\int_{0}^{t}e^{-2c(t-s)}e^{-2cs}\|b\|_{B^2_{\infty,1}}^{\frac{2}{3}}ds\notag\\
&\leq  C\epsilon e^{- 6ct}+C^3\epsilon t^{\frac{1}{3}}e^{-2 ct}+C^3\epsilon^{\frac{1}{3}} t^{\frac{1}{3}}e^{-2 ct}\|b\|_{L^1_t(B^2_{\infty,1})}^{\frac{2}{3}}\notag\\
&\leq  C^4 \epsilon^{\frac{1}{3}} e^{-ct}.
\end{align}
That is
\begin{align}\label{global3.5}
\|b\|_{L^{\infty}}\leq C^4\epsilon^{\frac{1}{3}} e^{-ct}.
\end{align}

(3) Next, we give the estimation of $\|w(t)\|_{B^{0}_{\infty,1}} $.\\
Applying Lemma \ref{priori estimate} to \eqref{global6}, we have
\begin{align}\label{global7}
\|w\|_{L^{\infty}_{T}(B^{0}_{\infty,1})}\leq & C(\|w_0\|_{B^{0}_{\infty,1}}+\|b\|_{L^{\infty}_T(L^{\infty})}\|b\|_{ L^{1}_T(B^{2}_{\infty,1})})(1+\int_{0}^{T}\|w\|_{L^{\infty}_{T}(B^{0}_{\infty,1})}ds)\notag\\
\leq & C(\epsilon+ 4C^4\epsilon^{\frac{1}{3}} )(1+\int_{0}^{T}\|w\|_{L^{\infty}_{T}(B^{0}_{\infty,1})}ds)\notag\\
\leq & c_{small}e^{c_{small}t}
\end{align}
where the last inequality holds by the Gronwall's inequality, and $c_{small}:=5C^5\epsilon^{\frac{1}{3}}\leq \frac{1}{2} c<\frac{1}{2}$~(see \eqref{globaljie1}).

(4) Finally, thanks to \eqref{global7} and \eqref{global3.5}, we conclude that
\begin{align}
\|b\|_{L^{\infty}_{T}(B^0_{\infty,1})\cap L^{1}_{T}(B^2_{\infty,1})}\leq & \|b_0\|_{B^0_{\infty,1}}+C\|-u\nabla b+b\nabla u\|_{L^{1}_{t}(B^{0}_{\infty,1})}\notag\\
\leq & \|b_0\|_{B^0_{\infty,1}}+C\|u\|_{L^{\infty}_{T}(L^{2})}\|b\|_{L^{1}_{T}(B^{2}_{\infty,1})}+C\|u\|_{L^{\infty}_{T}(B^{1}_{\infty,1})}\|b\|_{L^{\infty}_{T}(L^{\infty})}\notag\\
\leq & \|b_0\|_{B^0_{\infty,1}}+\frac{1}{4}\|b\|_{L^{1}_{T}(B^2_{\infty,1})}+C\int_{0}^{T}(\|u\|_{L^{\infty}_{T}(L^2)}+\|w\|_{B^{0}_{\infty,1}})\|b\|_{L^{\infty}}ds,\notag\\
\leq & \|b_0\|_{B^0_{\infty,1}}+\frac{1}{4}\|b\|_{L^{1}_{T}(B^2_{\infty,1})}+C\int_{0}^{T}(\epsilon+c_{small}e^{c_{small}s})C^4\epsilon^{\frac{1}{3}}e^{-cs}ds ,\notag\\
\leq & \frac{4}{3}(\epsilon+C^5\epsilon^{\frac{1}{3}}\int_{0}^{T}e^{c_{small}s}e^{- cs}ds) \notag\\
\leq & \frac{4}{3}*2 .
\end{align}
where the last inequality holds by $c_{small}\leq\frac{1}{2} c<\frac{1}{2}$ and $\epsilon\leq  \frac{c}{16(1+C^{20})}$ ~(see \eqref{globaljie1}).

So far, by (1)-(4) and the bootstrap argument we have proved that
\begin{align}\label{global18}
 \|b\|_{L^{\infty}_{t}(B^0_{\infty,1})\cap L^{1}_{t}(B^2_{\infty,1})}\leq \frac{8}{3}<4,\quad \forall t\in[0,T^*).
\end{align}

Then, one can obtain the global existence of $(u,b)$ in $ C([0,\infty); {B}^{s}_{p,r})\times \Big(C([0,\infty);{B}^{s-1}_{p,r})\cap L^1\big([0,\infty);{B}^{s+1}_{p,r}\big)\Big)$ easily, since $\|b\|_{L^{\infty}_{t}(B^0_{\infty,1})\cap L^{1}_{t}(B^2_{\infty,1})}\leq 4$ can be the blow-up criteria for \eqref{sp00}.
Indeed, similar to the computations in (1)-(3), one has
\begin{align}\label{global19}
\|w(t)\|_{B^{0}_{\infty,1}}\leq c_{small}e^{c_{small}t} \quad and\quad \|b(t)\|_{L^{\infty}}\leq C^4\epsilon^{\frac{1}{3}} e^{-ct},~~~\forall t\in[0,T^*).
\end{align}
Moreover, combining \eqref{globaljie1} and \eqref{global19}, we deduce that
\begin{align}\label{globalbuchong3}
\|w(t)\|_{L^{\infty}}\leq \|u_0\|_{B^1_{\infty,1}}+C\|b\|_{L^{\infty}_t(L^{\infty})}\|b\|_{ L^{1}_t(B^{2}_{\infty,1})}\leq C(\epsilon+ 4C^4\epsilon^{\frac{1}{3}} )\leq \frac{1}{4C}.
\end{align}

Applying Lemma \ref{heat}--\ref{priori estimate} to \eqref{sp00}, using the fact that $\|u\|_{B^{s}_{p,1}}\leq C\big(\|u\|_{L^2}+\|w\|_{B^{s-1}_{p,1}}\big)$ in periodic case and \eqref{globaljie1}, we obtain
\begin{align}\label{global20}
\|u\|_{L^{\infty}_{t}({B^{1+\frac{2}{p}}_{p,1}})}\leq & \|u_0\|_{{B^{1+\frac{2}{p}}_{p,1}}}
+C\int_{0}^{t}\|u\|_{B^1_{\infty,1}}\|u\|_{B^{1+\frac{2}{p}}_{p,1}}{\ud}\tau+\frac{1}{4}\|b\|_{L^1_t(B^{2+\frac{2}{p}}_{p,1})}
\end{align}
and
\begin{align}\label{global21}
\|b\|_{L^{\infty}_t(B^{\frac{2}{p}}_{p,1})\cap L^{1}_t(B^{2+\frac{2}{p}}_{p,1})}\leq & \|b_0\|_{B^{\frac{2}{p}}_{p,1}}
+C\int_{0}^{t}\|u\|_{L^{\infty}}\|b\|_{B^{2+\frac{2}{p}}_{p,1}}+\|b\|_{L^{\infty}}\|u\|_{B^{1+\frac{2}{p}}_{p,1}}{\ud}\tau\notag\\
\leq &  \|b_0\|_{B^{\frac{2}{p}}_{p,1}}
+C\int_{0}^{t}(\|u\|_{L^{2}}+\|w\|_{L^{\infty}})\|b\|_{B^{2+\frac{2}{p}}_{p,1}}{\ud}\tau+ \frac{1}{4}\|u\|_{L^{\infty}_t(B^{1+\frac{2}{p}}_{p,1})}\notag\\
\leq & \|b_0\|_{B^{\frac{2}{p}}_{p,1}}
+ \frac{1}{2}\|b\|_{L^1_t(B^{2+\frac{2}{p}}_{p,1})}+ \frac{1}{4}\|u\|_{L^{\infty}_t(B^{1+\frac{2}{p}}_{p,1})}.
\end{align}
Combining \eqref{global20}--\eqref{global21} and  Gronwall's inequality, we see
\begin{align*}
\|u\|_{L^{\infty}_t({B^{1+\frac{2}{p}}_{p,1}})}+\|b\|_{L^{\infty}_t(B^{\frac{2}{p}}_{p,1})\cap L^{1}_t(B^{2+\frac{2}{p}}_{p,1})}\leq & C(\|u_0\|_{{B^{1+\frac{2}{p}}_{p,1}}}+\|b_0\|_{B^{\frac{2}{p}}_{p,1}}+
\int_{0}^{t}\|u\|_{{B^{1}_{\infty,1}}}\|u\|_{L^{\infty}_{\tau}(B^{1+\frac{2}{p}}_{p,1})}{\ud}\tau)\\
\leq & C(\|u_0\|_{{B^{1+\frac{2}{p}}_{p,1}}}+\|b_0\|_{B^{\frac{2}{p}}_{p,1}})e^{\int_{0}^{t}\|u\|_{{B^{1}_{\infty,1}}}{\ud}\tau}\\
\leq & Ce^{e^{ Ct}},\quad \forall t\in [0,T^*).
\end{align*}
This implies $T^*=\infty$.
\end{proof}

\noindent\textbf{Acknowledgements.} This work was partially supported by National Natural Science Foundation
of China [grant number 11671407 and 11701586], the Macao Science and Technology Development Fund (grant number 0091/2018/A3), Guangdong Special Support Program (grant number 8-2015), and the key project of NSF of  Guangdong province (grant number 2016A030311004).

\phantomsection
\addcontentsline{toc}{section}{\refname}

\bibliographystyle{abbrv} 
\bibliography{Ye-Yinref}

\end{document}